\documentclass[11pt]{amsart}
\marginparwidth  0pt  \marginparsep 0pt \oddsidemargin  -0.1in
\evensidemargin  0pt \topmargin  -.3in \textwidth  6.5in \textheight
9in
\usepackage{amsmath,amsthm,amsfonts,amssymb,latexsym,epsfig}

\newtheorem{theorem}{Theorem}[section]

\newtheorem{lemma}{Lemma}[section]

\newtheorem{cor}{Corollary}[section]

\numberwithin{equation}{section}

\theoremstyle{definition}

\theoremstyle{remark}

\begin{document}
\title{On Weighted Remainder Form of Hardy-type Inequalities}
\author{Peng Gao}
\address{Division of Mathematical Sciences, School of Physical and Mathematical Sciences,
Nanyang Technological University, 637371 Singapore}
\email{penggao@ntu.edu.sg}
\date{July 30, 2009.}
\subjclass[2000]{Primary 47A30} \keywords{Hardy's inequality}


\begin{abstract}
  We use different approaches to study a generalization of a result of Levin and Ste\v ckin concerning an inequality analogous to Hardy's
  inequality. Our results lead naturally to the study of weighted remainder form of Hardy-type
  inequalities.
\end{abstract}

\maketitle
\section{Introduction}
\label{sec 1} \setcounter{equation}{0}

  Let $p>1$ and $l^p$ be the Banach space of all complex sequences ${\bf a}=(a_n)_{n \geq 1}$. The celebrated
   Hardy's inequality \cite[Theorem 326]{HLP} asserts that for
   $p>1$ and any ${\bf a} \in l^p$,
\begin{equation}
\label{eq:1} \sum^{\infty}_{n=1}\Big{|}\frac {1}{n}
\sum^n_{k=1}a_k\Big{|}^p \leq \Big(\frac {p}{p-1}
\Big)^p\sum^\infty_{k=1}|a_k|^p.
\end{equation}

  Hardy's inequality can be regarded as a special case of the
   following inequality:
\begin{equation*}
   \Big | \Big |C \cdot {\bf a}\Big | \Big |^p_p =\sum^{\infty}_{n=1} \Big{|}\sum^{\infty}_{k=1}c_{n,k}a_k
    \Big{|}^p \leq U_p \sum^{\infty}_{n=1}|a_n|^p,
\end{equation*}
   in which $C=(c_{n,k})$ and the parameter $p>1$ are assumed
   fixed, and the estimate is to hold for all complex
   sequences ${\bf a} \in l^p$. The $l^{p}$ operator norm of $C$ is
   then defined as
\begin{equation*}
\label{02}
    ||C||_{p,p}=\sup_{||{\bf a}||_p \leq 1}\Big | \Big |C \cdot {\bf a}\Big | \Big |_p.
\end{equation*}
    Hardy's inequality thus asserts that the Ces\'aro matrix
    operator $C=(c_{j,k})$, given by $c_{j,k}=1/j , k\leq j$ and $0$
    otherwise, is bounded on {\it $l^p$} and has norm $\leq
    p/(p-1)$. (The norm is in fact $p/(p-1)$.) Hardy's inequality
    leads naturally to the study on $l^p$ norms of general
    matrices. For example, we say a matrix $A=(a_{j,k})$ is a weighted
    mean matrix if its entries satisfy $a_{j,k}=0, k >j$ and
\begin{equation*}
    a_{j,k}=\lambda_k/\Lambda_j,  ~~ 1 \leq k \leq
    j; \Lambda_j=\sum^j_{i=1}\lambda_i, \lambda_i \geq 0, \lambda_1>0.
\end{equation*}
   There are many studies on the $l^{p}$ operator norm of a weighted mean matrix and we refer the reader to the articles \cite{B4}-\cite{Be1},
\cite{G}-\cite{G5} and the references therein for more results in
this area.

    In this paper, we are interested in the following analogue of
    Hardy's inequality, given as Theorem 345 of \cite{HLP}, which asserts that the following inequality holds for
$0<p<1$ and $a_n \geq 0$ with $c_p=p^p$:
\begin{equation}
\label{1}
  \sum^{\infty}_{n=1}\Big( \frac 1{n} \sum^{\infty}_{k=n}a_k \Big
  )^p \geq c_p \sum^{\infty}_{n=1}a^p_n.
\end{equation}
   It is noted in \cite{HLP} that the constant $c_p=p^p$ may not be best possible
  and a better constant was indeed obtained by Levin and Ste\v ckin
  \cite[Theorem 61]{L&S}. Their result is more general as they proved, among other things, the
   following inequality (\cite[Theorem 62]{L&S}), valid for $0< p
   \leq 1/3, r \leq p$ or $1/3<p<1, r \leq 1-2p$ (note that this is given in \cite{L&S} as $1/3<p<1, r \leq (1-p)^2/(1+p)$ but an inspection of the proof of
   Theorem 62 in \cite{L&S} shows that they actually proved their result for $1/3<p<1, r \leq 1-2p$, see especially the proof of Lemma 3 in the proof of Theorem 62 in \cite{L&S}
   for this) with $a_n \geq 0$,
\begin{equation}
\label{4.1}
  \sum^{\infty}_{n=1}\frac 1{n^r} \Big( \sum^{\infty}_{k=n}a_k \Big
  )^p \geq c_{p,r} \sum^{\infty}_{n=1}\frac {a^p_n}{n^{r-p}},
\end{equation}
   where the constant $c_{p,r}=(p/(1-r))^p$ is best possible (see for example, \cite{G9}). It follows that inequality \eqref{1} holds
   for $0<p \leq 1/3$ with the best possible constant $c_p=(p/(1-p))^p$.

   The above result of Levin and Ste\v ckin has been studied in
   \cite{G6} and \cite{G9}. In \cite{G6}, a simple proof of
   inequality \eqref{4.1} for the case $0<r=p \leq 1/3$ is given.
   In \cite{G9}, inequality \eqref{4.1} is shown to hold for
   $0<r=p \leq 0.346$.

   It is our goal in this paper to first generalize the above result of Levin and Ste\v ckin.
   We make a convention in this paper that for any integer $k \geq 1$, $((k+1)^0-k^0)/0=\ln ((k+1)/k)$ and we
   shall prove in Section \ref{sec 3} the following
\begin{theorem}
\label{thm1}
   Let $a_n > 0, 0<p<1$. The following inequality holds for any number $r$ satisfying $(2+r)p \leq 1$,
\begin{equation}
\label{2.02}
   \sum^{\infty}_{n=1}\Big ( \frac {1}{n^r}\sum^{\infty}_{k=n}\Big(\frac {(k+1)^{r}-k^{r}}{r}\Big )a_k \Big
   )^p \geq \Big ( \frac {p}{1-rp}\Big
   )^p\sum^{\infty}_{n=1}a^p_n.
\end{equation}
    The above inequality reverses when $p \geq 1, 1/p-2 \leq r < 1/p$ or $p<0$, $1/p-2 \leq r < 1/p$. The constant is best possible.
\end{theorem}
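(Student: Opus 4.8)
The plan is to establish \eqref{2.02} for $0<p<1$ in detail and to obtain the reversed statements by running the same scheme with the single convexity/concavity input reversed. Write $w_k:=\bigl((k+1)^r-k^r\bigr)/r$ (a short case analysis in $r$, with the stated convention at $r=0$, shows $w_k>0$ always) and $A_n:=\sum_{k\ge n}w_ka_k$. After a routine limiting argument (adapted to the sign of $p$) one may work with finitely supported $(a_n)$, so that all tails are eventually $0$ and interchanges of summation are automatic.

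The first move is to apply Jensen's inequality --- equivalently the reverse Hölder inequality --- to the inner sum. Given any positive auxiliary sequence $(v_j)$ with $U_n:=\sum_{j\ge n}v_j<\infty$, applying Jensen to $t\mapsto t^p$ with the weights $v_k/U_n$ $(k\ge n)$ gives, by concavity when $0<p<1$,
\[
A_n^p=\Bigl(\sum_{k\ge n}w_ka_k\Bigr)^p\ \ge\ U_n^{\,p-1}\sum_{k\ge n}v_k^{\,1-p}(w_ka_k)^p ,
\]
with the inequality reversed when $p\ge1$ or $p<0$ (convexity). Multiplying by $n^{-rp}$, summing over $n$, and interchanging the summations, the left side of \eqref{2.02} is bounded below (resp.\ above) by $\sum_k w_k^p v_k^{\,1-p}a_k^p\sum_{n=1}^{k}n^{-rp}U_n^{\,p-1}$, so \eqref{2.02} and its reverse reduce to the pointwise estimate
\[
w_k^p\,v_k^{\,1-p}\sum_{n=1}^{k}n^{-rp}U_n^{\,p-1}\ \ge\ \Bigl(\frac{p}{1-rp}\Bigr)^p\qquad(k\ge1),
\]
respectively the reverse of it.

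The auxiliary sequence must match the extremiser of \eqref{2.02}, which is asymptotically $a_k\sim k^{-1/p}$; equality in the Jensen step wants $v_j$ proportional to $w_ja_j$, so the natural candidate is $v_j=w_j\,j^{-1/p}$ (or a variant with a clean closed form, e.g.\ $U_n=\frac{p}{1-rp}n^{\,r-1/p}$). Using the telescoping identity $\sum_{j=n}^{m}w_j=\bigl((m+1)^r-n^r\bigr)/r$, the comparison $w_j\sim j^{\,r-1}$, and summation by parts, one computes $U_n\sim\frac{p}{1-rp}n^{\,r-1/p}$, $\sum_{n\le k}n^{-rp}U_n^{\,p-1}\sim\bigl(\frac{p}{1-rp}\bigr)^p k^{\,1/p-r}$, and $w_k^p v_k^{\,1-p}\sim k^{\,r-1/p}$, so the displayed pointwise inequality is an \emph{asymptotic equality}. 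This already identifies $(p/(1-rp))^p$ as the sharp constant, but it also shows the pointwise inequality is tight: no term of $\sum_{n\le k}n^{-rp}U_n^{\,p-1}$ and no error in $w_j\sim j^{\,r-1}$ can be discarded, so one must track all lower-order corrections and show that, under the hypothesis $(2+r)p\le1$ (equivalently $rp\le1-2p$, whence in particular $rp<1$ and $\sum n^{-rp}$ is controlled as needed), their net contribution has the right sign. I expect this verification --- including the possibility that the naive choice of $(v_j)$ must be perturbed, or the argument combined with a telescoping correction term in the spirit of the classical proof of \eqref{1} --- to be the main obstacle; it is a delicate but elementary discrete estimate, and this is precisely where the hypothesis enters. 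In the reversed cases the constraint $1/p-2\le r<1/p$ plays the identical role, with all inequalities flipped.

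Finally, the best-constant assertion follows by testing \eqref{2.02} with $a_k=k^{-1/p}$ for $k\le N$ and $a_k=0$ otherwise (and an analogous family in the reversed ranges): since $w_k\sim k^{\,r-1}$ one gets $A_n\sim\frac{p}{1-rp}n^{\,r-1/p}$ for $n=o(N)$, so the left side of \eqref{2.02} is $\sim\bigl(\frac{p}{1-rp}\bigr)^p\sum_{k\le N}k^{-1}$ while $\sum_k a_k^p=\sum_{k\le N}k^{-1}$, whence their ratio tends to $(p/(1-rp))^p$ as $N\to\infty$ and the constant cannot be enlarged. (As a remark, the continuous analogue --- with $w_k$ replaced by $t^{\,r-1}$ --- reduces directly to the classical weighted inequality \eqref{4.1} via the substitution $y=x^r/r$, which turns $f\mapsto x^{-r}\int_x^\infty t^{\,r-1}f(t)\,dt$ into $r^{-1}$ times $g\mapsto y^{-1}\int_y^\infty g(s)\,ds$; the discrete statement, however, seems to require the hands-on argument above.)
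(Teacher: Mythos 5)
Your setup coincides with the paper's first proof: apply the reversed H\"older (Jensen) inequality to the inner sum with an auxiliary sequence, interchange the order of summation, and reduce \eqref{2.02} to a pointwise inequality of the form $w_k^p v_k^{1-p}\sum_{n=1}^k n^{-rp}U_n^{p-1}\ge (p/(1-rp))^p$. Up to that point the proposal is sound, and your identification of the natural choice $U_n\asymp n^{r-1/p}$ is exactly right (the paper takes $v_k=k^{r-1/p}-(k+1)^{r-1/p}$, so that $U_n=n^{r-1/p}$ exactly by telescoping). The best-possible-constant sketch via truncations of $a_k=k^{-1/p}$ is also acceptable.

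The genuine gap is that you never prove the pointwise inequality, which is the entire content of the theorem; you explicitly defer it as ``the main obstacle'' and speculate that the auxiliary sequence may need to be perturbed. It does not: with the telescoping choice above the reduction becomes \eqref{2.3}, namely
\begin{equation*}
\Big(\frac{(k+1)^{r}-k^{r}}{r}\Big)^p\big(k^{r-1/p}-(k+1)^{r-1/p}\big)^{1-p}\sum^{k}_{n=1}n^{1/p-(1+r)}\ \ge\ \Big(\frac{p}{1-rp}\Big)^p,
\end{equation*}
and the paper closes it with two concrete tools you are missing. First, the Levin--Ste\v ckin power-sum bound \eqref{bound1}, $\sum_{i=1}^n i^s\ge \frac{s}{1+s}\frac{n^s(n+1)^s}{(n+1)^s-n^s}$ for $s\ge 1$, applied with $s=1/p-(1+r)$; the hypothesis $(2+r)p\le 1$ enters precisely here, as it is equivalent to $1/p-(1+r)\ge 1$ (and in the reversed cases $1/p-2\le r<1/p$ one uses the reversed form of \eqref{bound1}, valid for $-1<s\le 1$). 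Second, after this substitution the required per-$n$ estimate is recast as the integral inequality \eqref{3.6}, $\bigl(\int_n^{n+1}x^{r-1}dx\bigr)^p\bigl(\int_n^{n+1}x^{r-1/p-1}dx\bigr)^{1-p}\ge\int_n^{n+1}x^{r-1/p}dx$, which is an instance of H\"older's inequality for integrals. Your asymptotic analysis only shows the pointwise inequality is tight in the limit $k\to\infty$; it gives no control for finite $k$, and ``tracking all lower-order corrections'' is not a proof. Without an argument replacing \eqref{bound1} and \eqref{3.6}, the proposal establishes the sharp constant heuristically but not the inequality itself.
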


     One can show following the construction in \cite{G9} that the
    constant in \eqref{2.02} is best possible. We let $q$ be the number defined by $1/p+1/q=1$ and note that by
the duality principle (see \cite{M}),
    the statement of Theorem \ref{thm1} is equivalent to the
    following
\begin{theorem}
\label{thm2}
   Let $a_n > 0, 0<p<1$. The following inequality holds for any number $r$ satisfying $(2+r)p \leq 1$,
\begin{equation}
\label{1.05}
   \sum^{\infty}_{n=1}\Big ( \Big ( \frac {(n+1)^{r}-n^{r}}{r}\Big )\sum^{n}_{k=1}\frac {a_k}{k^{r}} \Big
   )^q \leq \Big ( \frac {p}{1-rp}\Big
   )^{q}\sum^{\infty}_{n=1}a^q_n.
\end{equation}
    The above inequality also holds when $p > 1, 1/p-2 \leq r < 1/p$  and the reversed
    inequality \eqref{1.05}
    holds when $p<0$, $1/p-2 \leq r < 1/p$. The constant is best possible.
\end{theorem}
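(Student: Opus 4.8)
The plan is to obtain Theorem \ref{thm2} from Theorem \ref{thm1} by the duality principle, as already indicated in the text above; the steps below spell out how that reduction goes.

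First I would record the matrix picture. Set $t_{n,k}=n^{-r}\big((k+1)^{r}-k^{r}\big)/r$ for $k\ge n$ and $t_{n,k}=0$ for $k<n$, so that the left side of \eqref{2.02} is $\sum_{n}\big(\sum_{k}t_{n,k}a_{k}\big)^{p}$, i.e.\ $\|T{\bf a}\|_{p}^{p}$ for $T=(t_{n,k})$. Comparing entries, the matrix $S=(s_{n,k})$ with $s_{n,k}=\big((n+1)^{r}-n^{r}\big)/r\cdot k^{-r}$ for $k\le n$ (and $0$ otherwise), which is exactly the matrix occurring in \eqref{1.05}, satisfies $s_{n,k}=t_{k,n}$; thus $S=T^{t}$, and Theorem \ref{thm2} is precisely the assertion that the transposed operator obeys the corresponding bound with exponent $q$.

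Next I would apply the appropriate form of duality in each of the three parameter regimes. For $0<p<1$ one has $q=p/(p-1)<0$, and \eqref{2.02} is the reverse inequality $\|T{\bf a}\|_{p}\ge\big(p/(1-rp)\big)\|{\bf a}\|_{p}$ for all positive ${\bf a}$; by the duality principle (reverse Hölder for the exponents $p\in(0,1)$, $q<0$, together with attainment of the reverse Hölder extremal; see \cite{M}) this is equivalent to $\|T^{t}{\bf y}\|_{q}\ge\big(p/(1-rp)\big)\|{\bf y}\|_{q}$ for all positive ${\bf y}$, and since $q<0$ raising to the $q$-th power reverses this into precisely \eqref{1.05}. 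For $p>1$ one uses instead the classical identity $\|T\|_{p,p}=\|T^{t}\|_{q,q}$ with $q=p/(p-1)>1$: here the reversed form of \eqref{2.02} asserted in Theorem \ref{thm1} is the forward bound $\|T{\bf a}\|_{p}\le\big(p/(1-rp)\big)\|{\bf a}\|_{p}$, whose transpose is \eqref{1.05}. The case $p<0$ (so $q\in(0,1)$) is symmetric to the case $0<p<1$ with $p$ and $q$ exchanged, and it yields the reversed form of \eqref{1.05}. In every case the hypothesis $r<1/p$ makes $p/(1-rp)$ a positive number, carried across the duality unchanged, and the correspondence of the (near-)extremizing sequences shows that the constant in \eqref{1.05} is best possible because the one in \eqref{2.02} is (which follows from the construction in \cite{G9}).

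The step I would expect to be the main obstacle is not the deduction itself but the verification that the duality principle is legitimate in this setting: for $p\le 1$ and for $p<0$, $\|\cdot\|_{p}$ is not a norm, so one must check that the relevant series converge (or argue by truncation and a limiting argument) before reverse Hölder and the attainment of its extremal can be invoked. This is routine and is exactly what \cite{M} provides, so beyond Theorem \ref{thm1} no new idea is needed.
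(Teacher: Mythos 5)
Your reduction is correct in substance, but it is a genuinely different route from the one the paper takes for Theorem \ref{thm2}. The paper does remark, just before the statement, that Theorems \ref{thm1} and \ref{thm2} are equivalent by the duality principle of \cite{M}; however, its actual proof of Theorem \ref{thm2} (end of Section \ref{sec 2}) is a direct argument: apply the reversed H\"older inequality to $\sum_{k\le n}a_k/k^r$ with the power weight $k^{\gamma}$, $\gamma=1/p+r/(p-1)$, invoke the Levin--Ste\v ckin bound \eqref{bound1} for $\sum_{i\le n}i^{\gamma/(1-q)}$ (the exponent is $\ge 1$ exactly when $(2+r)p\le 1$, resp.\ the reversed bound in the other regimes), and then sum a telescoping chain of integrals $\int_n^{n+1}x^{q(r-1)+(1-q)(-\gamma/(1-q)-1)}dx$ whose exponent is exactly $-1\cdot$(nothing left over), giving the constant $(p/(1-rp))^q$ with no duality needed. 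Your proposal instead quotes Theorem \ref{thm1} (independently proved in Section \ref{sec 3}, so there is no circularity) and transports it through transposition; the identification $S=T^{t}$ and the bookkeeping of signs of $q$ and of $p/(1-rp)$ in the three regimes are all correct. What your route buys is brevity; what it costs is precisely the step you flag: for $0<p<1$ (so $q<0$) and for $p<0$ (so $q\in(0,1)$) the quantity $\|\cdot\|_p$ is not a norm, and the needed direction of the duality must be written out -- e.g.\ $\sum_k a_k(T^{t}b)_k=\sum_n b_n(Ta)_n\ge \|Ta\|_p\|b\|_q\ge C\|a\|_p\|b\|_q$ by reverse H\"older, followed by the attainment choice $a_k^p\propto (T^{t}b)_k^q$ (with a truncation to finitely many terms to avoid convergence issues), since \cite{M} is stated in the bilinear/$\ell^2$ setting and does not literally cover these exponents. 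This is exactly the care the paper sidesteps by giving the self-contained proof sketched above, which also yields all three parameter regimes uniformly (using \eqref{bound1} and its reverse); the best-possible-constant claim is in both treatments inherited from the construction in \cite{G9}, transferred across the same duality in your version.
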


    We now write $r=\alpha +\beta/p$ in Theorem \ref{thm1} and note
    that for $\beta \leq 0$, we have
\begin{equation*}
    n^{\beta/p}\Big( \frac {(n+1)^{\alpha}-n^{\alpha}}{\alpha} \Big
) \geq  \Big( \frac
{(n+1)^{\alpha+\beta/p}-n^{\alpha+\beta/p}}{\alpha+\beta/p} \Big
).
\end{equation*}
    This combined with inequality \eqref{2.02} allows us to deduce
    the following (via a change of variables $a_n \rightarrow
    n^{-\beta/p}a_n$)
\begin{cor}
\label{cor0}
   Let $a_n \geq 0, \beta \leq 0< \alpha, 0<p<1$. The following inequality holds for $0 < p \leq
   (1-\beta)/(2+\alpha)$,
\begin{equation*}
   \sum^{\infty}_{n=1}\frac {1}{n^{\beta}}\Big ( \frac {1}{n^{\alpha}}\sum^{\infty}_{k=n}\Big((k+1)^{\alpha}-k^{\alpha}\Big )a_k \Big
   )^p \geq \Big ( \frac {\alpha p}{1-\beta-\alpha p}\Big
   )^p\sum^{\infty}_{n=1}\frac {a^p_n}{n^{\beta}}.
\end{equation*}
    The constant is best possible.
\end{cor}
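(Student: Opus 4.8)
The plan is to read off Corollary~\ref{cor0} from Theorem~\ref{thm1} by the specialization $r=\alpha+\beta/p$ together with the elementary pointwise estimate recorded just above the corollary, exactly as the surrounding text suggests. First I would substitute $r=\alpha+\beta/p$ into \eqref{2.02}. Because $\beta\le 0<\alpha$ and $0<p<1$, the hypothesis $(2+r)p\le 1$ of Theorem~\ref{thm1} becomes $(2+\alpha)p+\beta\le 1$, i.e. $0<p\le(1-\beta)/(2+\alpha)$, which is exactly the range in the corollary; moreover $1-rp=1-\beta-\alpha p$, and $(2+r)p\le 1$ forces $1-rp\ge 2p>0$, so the constant $(p/(1-rp))^p$ in \eqref{2.02} is well defined and equals $\big(p/(1-\beta-\alpha p)\big)^p$. (Theorem~\ref{thm1} is stated for $a_n>0$; the case $a_n\ge 0$ of the corollary follows from it by a routine approximation.)

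Next I would apply the change of variables $a_n\mapsto n^{-\beta/p}a_n$ in \eqref{2.02}. On the right this produces $\big(p/(1-\beta-\alpha p)\big)^p\sum_{n=1}^{\infty}n^{-\beta}a_n^p$, while in the inner sum the coefficient of $a_k$ becomes $k^{-\beta/p}\big((k+1)^{r}-k^{r}\big)/r$. The displayed pointwise inequality preceding the corollary, after dividing through by $k^{\beta/p}$, says precisely that $k^{-\beta/p}\big((k+1)^{r}-k^{r}\big)/r\le\big((k+1)^{\alpha}-k^{\alpha}\big)/\alpha$ for every $k\ge 1$ (it comes from $(x^{t}-y^{t})/t=\int_{y}^{x}u^{t-1}\,du$ and the monotonicity of $u\mapsto u^{\beta/p}$ for $\beta\le 0$). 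Since the $a_k$ are nonnegative, this yields $\sum_{k=n}^{\infty}k^{-\beta/p}\big((k+1)^{r}-k^{r}\big)a_k/r\le\sum_{k=n}^{\infty}\big((k+1)^{\alpha}-k^{\alpha}\big)a_k/\alpha$; as $x\mapsto x^{p}$ is nondecreasing on $[0,\infty)$ for $0<p<1$ and $n^{-r}>0$, raising to the $p$-th power and summing over $n$ preserves this, and combining with the substituted form of \eqref{2.02} gives
\[
\sum_{n=1}^{\infty}\Big(\frac{1}{n^{r}}\sum_{k=n}^{\infty}\frac{(k+1)^{\alpha}-k^{\alpha}}{\alpha}\,a_k\Big)^{p}\ \ge\ \Big(\frac{p}{1-\beta-\alpha p}\Big)^{p}\sum_{n=1}^{\infty}\frac{a_n^{p}}{n^{\beta}}.
\]
Finally, pulling $\alpha^{-p}$ out of the left side and writing $n^{-r}=n^{-\beta/p}n^{-\alpha}$ to move $n^{-\beta/p}$ inside the $p$-th power rewrites the left side as $\alpha^{-p}\sum_{n}n^{-\beta}\big(n^{-\alpha}\sum_{k\ge n}((k+1)^{\alpha}-k^{\alpha})a_k\big)^{p}$; multiplying by $\alpha^{p}$ produces exactly the inequality of Corollary~\ref{cor0} with constant $\big(\alpha p/(1-\beta-\alpha p)\big)^{p}$.

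For the sharpness claim I would proceed as the remark preceding the corollary indicates. One option is to repeat, with the obvious changes, the construction from \cite{G9} that proves \eqref{2.02} sharp, and to feed the resulting near-extremal sequences (renormalized by the above change of variables) into Corollary~\ref{cor0}; this drives the ratio of its two sides down to $\big(\alpha p/(1-\beta-\alpha p)\big)^{p}$. A slicker option is to note that the only estimate used in deducing the corollary from \eqref{2.02} is the pointwise one, whose two sides have ratio tending to $1$ as $k\to\infty$; since the change of variables $a_n\mapsto n^{-\beta/p}a_n$ is a bijection of the positive sequences and the near-extremal sequences for \eqref{2.02} place asymptotically all of their $\ell^{p}$-mass at large indices, this asymptotic equality shows the extremal ratio is unchanged, so the constant cannot be enlarged.

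I expect the main inequality to be pure bookkeeping once Theorem~\ref{thm1} and the stated pointwise estimate are granted. The one place that genuinely requires care is the second sharpness argument: one must check that the sequences witnessing sharpness of \eqref{2.02} can be taken supported on $[M,\infty)$ with $M\to\infty$, so that the bounded discrepancy in the pointwise estimate over small $k$ contributes negligibly to both sides of the corollary. If one would rather not verify this, the direct construction modeled on \cite{G9} avoids the issue altogether.
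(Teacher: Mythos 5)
Your proposal is correct and follows exactly the paper's route: the corollary is deduced from Theorem \ref{thm1} by setting $r=\alpha+\beta/p$, using the displayed pointwise inequality for $\beta\le 0$, and making the change of variables $a_n\rightarrow n^{-\beta/p}a_n$, which is precisely your bookkeeping (including the translation of $(2+r)p\le 1$ into $0<p\le(1-\beta)/(2+\alpha)$ and $1-rp=1-\beta-\alpha p$). The sharpness claim is treated in the paper only by appeal to the construction in \cite{G9}, in line with your first option.
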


    One can also deduce the cases $0< p
   \leq 1/3, r \leq p$ or $1/3<p<1, r \leq 1-2p$ of inequality
   \eqref{1} via similar transformations of inequality
   \eqref{2.02}.

     The case $r=0$ in Theorem \ref{thm1} implies
    the following
\begin{cor}
\label{cor1}
   Let $a_n \geq 0$. For $0 < p \leq 1/2$, we have
\begin{equation*}
  \sum^{\infty}_{n=1}\Big( \sum^{\infty}_{k=n} \ln \Big(\frac {k+1}{k}\Big ) a_k \Big
  )^p \geq p^p \sum^{\infty}_{n=1}a^p_n.
\end{equation*}
    The constant is best possible.
\end{cor}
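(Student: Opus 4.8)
The plan is to read off Corollary \ref{cor1} as the $r = 0$ instance of Theorem \ref{thm1}. First I would verify that $r = 0$ is admissible: the condition $(2+r)p \le 1$ becomes $2p \le 1$, which together with $0 < p$ is exactly the hypothesis $0 < p \le 1/2$. Then I would substitute $r = 0$ into inequality \eqref{2.02}, using the convention $((k+1)^{0}-k^{0})/0 = \ln((k+1)/k)$ fixed in the introduction: the outer factor $1/n^{r}$ becomes $1$, the coefficient $((k+1)^{r}-k^{r})/r$ of $a_{k}$ becomes $\ln((k+1)/k)$, and the constant $(p/(1-rp))^{p}$ becomes $p^{p}$. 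What is left of \eqref{2.02} is precisely the claimed inequality, for sequences with $a_n > 0$.

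It then only remains to relax $a_n > 0$ to $a_n \ge 0$ and to justify the sharpness claim. For the former, given a nonnegative sequence I would apply the inequality just obtained to the strictly positive sequence $(a_n + \varepsilon 2^{-n})_{n\ge 1}$ and let $\varepsilon \to 0^{+}$: since $0 < p < 1$ one has $\sum_n (a_n + \varepsilon 2^{-n})^{p} \le \sum_n a_n^{p} + \varepsilon^{p}\sum_n 2^{-np}$, so the right-hand side tends to $p^{p}\sum_n a_n^{p}$, while the left-hand side decreases monotonically to $\sum_n\big(\sum_{k\ge n}\ln\frac{k+1}{k}\,a_k\big)^{p}$; the inequality survives the limit by monotone convergence.

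For the sharpness of $p^{p}$ I would follow the construction in \cite{G9} for \eqref{2.02}. With the truncated test sequences $a_k = k^{-1/p}$ for $1 \le k \le N$ and $a_k = 0$ otherwise, together with $\ln\frac{k+1}{k} = \frac1k + O(k^{-2})$ and the asymptotic $\sum_{k=n}^{N}k^{-1-1/p} = p\,n^{-1/p}(1+o(1))$, one checks that both sides of the inequality are asymptotically $p^{p}\ln N$ as $N \to \infty$, so no constant exceeding $p^{p}$ is possible. I do not expect a genuine obstacle anywhere: the substantive content sits in Theorem \ref{thm1}, and the two remaining steps are standard; the only mild care is needed in the sharpness computation, where the estimate for $\sum_{k=n}^{N}k^{-1-1/p}$ must be controlled uniformly in $n$ over the relevant range, the contribution of the small values of $n$ being $o(\ln N)$.
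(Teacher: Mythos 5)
Your proposal is correct and is essentially the paper's own argument: Corollary \ref{cor1} is obtained by specializing Theorem \ref{thm1} to $r=0$ (where $(2+r)p\le 1$ becomes $p\le 1/2$, the convention $((k+1)^0-k^0)/0=\ln((k+1)/k)$ applies, and the constant becomes $p^p$), with sharpness inherited from the construction of \cite{G9} exactly as the paper indicates. Your extra care in passing from $a_n>0$ to $a_n\ge 0$ and in detailing the test-sequence asymptotics only fills in routine steps the paper leaves implicit.
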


    Note that as $\ln ((k+1)/k) \leq 1/k$, Corollary \ref{cor1}
    implies the following well-known Copson's inequality \cite[Theorem 344]{HLP} when $0<p
    \leq 1/2$:
\begin{equation*}
  \sum^{\infty}_{n=1}\Big( \sum^{\infty}_{k=n} \frac {a_k}{k} \Big
  )^p \geq p^p \sum^{\infty}_{n=1}a^p_n.
\end{equation*}

   Similarly, the case $r=0$ in Theorem
   \ref{thm2} implies the following
\begin{cor}
\label{cor2}
   Let $a_n > 0$. For $-1 \leq  p <0$, we have
\begin{equation*}
   \sum^{\infty}_{n=1}\Big( \ln \Big(\frac {n+1}{n} \Big )\sum^{n}_{k=1}a_k \Big
   )^p \leq \Big (\frac {p}{p-1}\Big )^{p}\sum^{\infty}_{n=1}a^p_n.
\end{equation*}
    The constant is best possible.
\end{cor}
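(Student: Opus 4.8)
The plan is to obtain Corollary \ref{cor2} as the special case $r=0$ of Theorem \ref{thm2}. First I would substitute $r=0$ into inequality \eqref{1.05}: by the convention that $((n+1)^0-n^0)/0=\ln((n+1)/n)$, the factor $((n+1)^r-n^r)/r$ becomes $\ln((n+1)/n)$, the weight $a_k/k^r$ becomes $a_k$, the admissibility condition $(2+r)p\le 1$ reduces to $0<p\le 1/2$, and the constant $(p/(1-rp))^q$ reduces to $p^q$. Thus Theorem \ref{thm2} already yields
\[
   \sum^{\infty}_{n=1}\Big ( \ln\Big(\frac{n+1}{n}\Big)\sum^{n}_{k=1}a_k \Big )^q \leq p^{q}\sum^{\infty}_{n=1}a^q_n
\]
for every $p\in(0,1/2]$, where $1/p+1/q=1$.

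The next step is purely a change of notation. Since $q$ (and not $p$) is the exponent actually appearing in the displayed inequality, I would re-express everything in terms of $q$: as $p$ runs over $(0,1/2]$ the conjugate $q$ runs over $[-1,0)$, and conversely every value of $[-1,0)$ arises this way; moreover $1/p=1-1/q=(q-1)/q$, so $p=q/(q-1)$ and hence $p^q=(q/(q-1))^q$. Relabelling $q$ as $p$, exactly as in the statement of Corollary \ref{cor2}, this reads
\[
   \sum^{\infty}_{n=1}\Big( \ln \Big(\frac {n+1}{n} \Big )\sum^{n}_{k=1}a_k \Big)^p \leq \Big (\frac {p}{p-1}\Big )^{p}\sum^{\infty}_{n=1}a^p_n,\qquad -1\le p<0,
\]
which is the asserted inequality; the hypothesis $a_n>0$ is inherited from Theorem \ref{thm2} and is indeed needed since now $p<0$.

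There is no substantive obstacle here: the argument is just the $r=0$ specialization together with the dictionary between the conjugate exponents $p$ and $q$ and their corresponding intervals. For the final claim that the constant is best possible, I would note that Theorem \ref{thm2} asserts sharpness of the constant $(p/(1-rp))^q$ for each admissible $r$, hence in particular for $r=0$, and the relabelling $q\mapsto p$ does not affect this; alternatively, one can run the construction of \cite{G9} adapted to the present weights, using $\ln((n+1)/n)\sim 1/n$, to produce sequences along which the ratio of the two sides of the displayed inequality approaches $(p/(p-1))^p$.
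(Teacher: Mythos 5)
Your proposal is correct and is essentially the paper's own deduction: Corollary \ref{cor2} is obtained there precisely as the case $r=0$ of Theorem \ref{thm2}, where the condition $(2+r)p\le 1$ forces $0<p\le 1/2$, the conjugate exponent $q=p/(p-1)$ then runs over $[-1,0)$, and relabelling $q$ as $p$ turns the constant $p^{q}$ into $\big(p/(p-1)\big)^{p}$. The sharpness claim is likewise inherited from the sharpness asserted in Theorem \ref{thm2}, so nothing further is needed.
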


    We point out here that Corollary \ref{cor2} implies the well-known
    Knopp's inequality \cite[Satz IV]{K} (which is inequality \eqref{eq:1} with $p<0$ and $a_n>0$) when $-1 \leq p <0$.

   We note that it is pointed out in \cite{G9} that inequality
   \eqref{1} can not hold for all $0<p<1$ with the constant $c_p$
   being $(p/(1-p))^p$.
   However, Levin and Ste\v
   ckin \cite[Theorem 61]{L&S} was able to improve the constant $c_p=p^p$ for all $0<p<1$ as their result is given in the
   following:
\begin{theorem}
\label{thm4}
    Inequality \eqref{1} holds with $c_p$ being
\begin{equation*}
    c_p = \left\{ \begin{array}{ll}
\Big (\frac
{p}{1-p}\Big )^p, & 0<p \leq 1/3;\\
\frac {1}{2}\Big (\frac {1+p}{1-p} \Big
    )^{1-p}, & 1/3 < p \leq
3/5; \\
2\Big ( \frac {p}{3-p}\Big )^p, & 3/5 \leq
    p<1.\end{array}
\right.
\end{equation*}
\end{theorem}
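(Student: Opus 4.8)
The plan is the following. The case $0<p\le 1/3$ is already contained in \eqref{4.1}: taking $r=p$ there is legitimate (the hypothesis reads $0<p\le 1/3$, $r\le p$), and since then $r-p=0$ it becomes
\[
  \sum_{n=1}^{\infty}\frac{1}{n^{p}}\Big(\sum_{k=n}^{\infty}a_k\Big)^{p}
  \ge\Big(\frac{p}{1-p}\Big)^{p}\sum_{n=1}^{\infty}a_n^{p},
\]
whose left-hand side is exactly $\sum_{n}\big(n^{-1}\sum_{k\ge n}a_k\big)^{p}$; this is \eqref{1} with $c_p=(p/(1-p))^{p}$. So assume from now on that $1/3<p<1$. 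Here Theorem \ref{thm1} is not enough: its constraint $(2+r)p\le1$ forces $r\le 1/p-2$, and even combined with the elementary bound $((k+1)^{r}-k^{r})/r\le n^{r-1}$ (valid for $k\ge n$, $r<1$, by the mean value theorem) it produces only a constant of the shape $(p/(1-rp))^{p}$, which in this range is smaller than the asserted $c_p$.

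For $1/3<p<1$ the plan rests on concavity. Since $t\mapsto t^{p}$ is concave on $[0,\infty)$, for any weights $\mu_{n,k}\ge0$ with $\sum_{k\ge n}\mu_{n,k}=1$ Jensen's inequality gives
\[
  \Big(\frac{1}{n}\sum_{k\ge n}a_k\Big)^{p}
  =\Big(\sum_{k\ge n}\mu_{n,k}\,\frac{a_k}{n\mu_{n,k}}\Big)^{p}
  \ge\sum_{k\ge n}\frac{\mu_{n,k}^{\,1-p}}{n^{p}}\,a_k^{p},
\]
and summing in $n$ and interchanging summations,
\[
  \sum_{n=1}^{\infty}\Big(\frac{1}{n}\sum_{k\ge n}a_k\Big)^{p}
  \ge\sum_{k=1}^{\infty}a_k^{p}\sum_{n=1}^{k}\frac{\mu_{n,k}^{\,1-p}}{n^{p}}.
\]
Thus \eqref{1} holds with $c_p=\inf_{k\ge1}\sum_{n=1}^{k}\mu_{n,k}^{\,1-p}n^{-p}$, and the task is to choose $\mu$ making this infimum at least the stated $c_p$.

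For $1/3<p\le 3/5$ take $\gamma=(1+p)/(1-p)$ and $\mu_{n,k}=n^{\gamma}\big(k^{-\gamma}-(k+1)^{-\gamma}\big)$ for $n\le k$; this is a probability vector in $k$ because the sum over $k\ge n$ telescopes to $n^{\gamma}\cdot n^{-\gamma}=1$. Since $\gamma(1-p)-p=1$ the inner sum collapses to a polynomial,
\[
  \Phi_k:=\sum_{n=1}^{k}\frac{\mu_{n,k}^{\,1-p}}{n^{p}}
  =\big(k^{-\gamma}-(k+1)^{-\gamma}\big)^{1-p}\sum_{n=1}^{k}n
  =\frac{k(k+1)}{2}\big(k^{-\gamma}-(k+1)^{-\gamma}\big)^{1-p},
\]
and, as $(\gamma+1)(1-p)=2$, one checks that $\Phi_k\to\tfrac12\gamma^{1-p}=\tfrac12\big((1+p)/(1-p)\big)^{1-p}$, the claimed $c_p$; so it suffices to show that $(\Phi_k)$ is non-increasing, i.e.
\[
  \frac{(k+1)^{-\gamma}-(k+2)^{-\gamma}}{k^{-\gamma}-(k+1)^{-\gamma}}\le\Big(\frac{k}{k+2}\Big)^{1/(1-p)}\qquad(k\ge1),
\]
which, after expressing both sides by the mean value theorem and comparing logarithms, reduces to an elementary inequality holding throughout $1/3<p\le 3/5$. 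For $3/5\le p<1$ one replaces these weights by ones that are \emph{corrected} at the first one or two indices $n$ — equivalently, the $n=1$ (and $n=2$) contributions are peeled off and the tail estimate is applied to the remainder — with the corrections calibrated so that $\sum_{n\le k}\mu_{n,k}^{1-p}n^{-p}$ equals the common value $2(p/(3-p))^{p}$ at the small $k$ and exceeds it afterwards. This modified construction, which yields the \emph{larger} constant $2(p/(3-p))^{p}$ and is valid only once $p\ge 3/5$, is the reason Theorem \ref{thm4} changes form at $p=3/5$.

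The main obstacle in both of the latter cases is the monotonicity statement for $\Phi_k$ (and for its corrected analogue): one must show that a sequence assembled from the increments of $t\mapsto t^{-\gamma}$ and the partial sums of $n^{\gamma(1-p)-p}$ never falls below its limit, and this inequality is asymptotically tight as $k\to\infty$, so it must be established with care. The one-line estimate $t^{-\gamma}-(t+1)^{-\gamma}=\gamma\xi^{-\gamma-1}$ with $\xi\in(t,t+1)$ is by itself too crude; one needs to locate $\xi$ more precisely (for instance through the convexity of $t^{-\gamma-1}$ or a two-term expansion) in order to close the inequality uniformly in $k$. Everything else — the choice of $\gamma$, the evaluation of the limits, and the Jensen reduction — is then routine.
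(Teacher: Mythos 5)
Your first case is fine, and your middle-range setup is in substance the same as the paper's: the Jensen step with weights $\mu_{n,k}=n^{\gamma}(k^{-\gamma}-(k+1)^{-\gamma})$ and $\gamma=(1+p)/(1-p)$ is exactly the auxiliary-sequence argument by which the paper proves Theorem \ref{thm5}, specialized to $r=p$ (there $\beta=1+2r/(1-p)$ equals your $\gamma$). But the proof has a genuine gap precisely at its heart: the claim that $\Phi_k=\tfrac{k(k+1)}{2}\big(k^{-\gamma}-(k+1)^{-\gamma}\big)^{1-p}$ never falls below its limit is asserted, not proved. Your displayed reduction
\[
  \frac{(k+1)^{-\gamma}-(k+2)^{-\gamma}}{k^{-\gamma}-(k+1)^{-\gamma}}\le\Big(\frac{k}{k+2}\Big)^{1/(1-p)}
\]
is asymptotically tight (both sides are $1-(\gamma+1)/k+O(k^{-2})$), and the crude mean-value/pointwise comparison actually fails in the wrong direction, as your own closing paragraph concedes; so ``reduces to an elementary inequality'' is not a proof. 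This is exactly the content of the paper's Lemma \ref{lem1}, which establishes that $f_{p,r,\beta}(x)=x^{-1}\big((1+x)^{\beta-r/(1-p)}-(1+x)^{-r/(1-p)}\big)$ is increasing on $[0,1]$ for $\beta\ge 1+2r/(1-p)$ by a two-stage differentiation argument ($h_{p,r,\beta}(0)=0$, $h'_{p,r,\beta}\ge0$ via the mean value theorem applied to $(1+x)^{\beta}-1$); since $\Phi_k=\tfrac12 f^{1-p}_{p,p,\beta}(1/k)$, that lemma is what closes your middle case, and it is missing from your argument.

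The case $3/5\le p<1$ is in worse shape: ``correct the weights at the first one or two indices, calibrated so that the sums equal $2(p/(3-p))^p$ at small $k$ and exceed it afterwards'' is a plan, not a construction — no modified weights are written down, the normalization $\sum_{k\ge n}\mu_{n,k}=1$ for the corrected rows is not arranged, and no uniform-in-$k$ verification is attempted, even though (as in the middle range) the uniform lower bound is exactly where the difficulty lies. The paper takes a different route here: it applies Theorem \ref{thm5} with $r=(3-2p)(1-p)/(2p)$ (the minimizer of $c^{-1}_{p,r}$, which lies below $p$ precisely when $p\ge3/5$), obtaining the weighted inequality \eqref{4.1} with constant $2(p/(3-p))^p$, and then invokes Lemma 7 of Levin and Ste\v ckin to pass from that weighted inequality back to \eqref{1}; some such transfer device (or an actual worked-out weight modification) is needed, and your sketch supplies neither.
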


     Our method in this paper
   allows us to give another proof of the above result. In fact,
   we shall prove the following result in Section \ref{sec 4}:
\begin{theorem}
\label{thm5}
    Let $0<p<1, 0<r \leq p$. Inequality \eqref{4.1} holds with $c_{p,r}$ with
\begin{equation*}
    c^{-1}_{p,r}=(2-p+r)\Big ( \frac {1-p}{1-p+2r} \Big )^{1-p}.
\end{equation*}
\end{theorem}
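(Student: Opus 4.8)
The plan is to reduce inequality \eqref{4.1} to a pointwise estimate of the kind used to prove Theorem \ref{thm1}, but now keeping explicit constants throughout. Writing $b_n = n^{-r/p}\sum_{k=n}^\infty a_k$, one wants to show $\sum_n b_n^p \ge c_{p,r}\sum_n n^{-r+p}a_n^p / n^{?}$; more precisely, one normalizes and seeks the largest $c_{p,r}$ for which the inequality survives. Since $0<p<1$, the relevant functional inequality runs in the superadditive direction, and the natural tool is a weighted version of the elementary estimate $x^p - y^p \le p y^{p-1}(x-y)$ (valid for $0<p<1$, $x,y>0$), or rather its reverse applied telescopically. First I would set $A_n = \sum_{k=n}^\infty a_k$ so that $a_n = A_n - A_{n+1}$, substitute into the right-hand side of \eqref{4.1}, and attempt to bound $\sum_n n^{p-r}(A_n - A_{n+1})^p$ from above by $c_{p,r}^{-1}\sum_n n^{-r}A_n^p$ via a term-by-term comparison, summation by parts, and a judicious choice of auxiliary exponent.

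The core of the argument will be an inequality of the form
\begin{equation*}
  n^{p-r}(A_n - A_{n+1})^p \le \lambda_n A_n^p - \mu_n A_{n+1}^p + (\text{telescoping remainder}),
\end{equation*}
where $\lambda_n, \mu_n$ are chosen so that the sum of the first two terms telescopes (i.e.\ $\mu_n = \lambda_{n+1}$ up to lower-order corrections) and the coefficient ratio is optimized over a free parameter. The constant $c_{p,r}^{-1} = (2-p+r)\big(\tfrac{1-p}{1-p+2r}\big)^{1-p}$ strongly suggests that the optimization is carried out by a substitution $A_{n+1}/A_n = t$ and then maximizing a function like $t \mapsto \big((1-t)^p - \text{const}\cdot t^p + \text{const}\big)\big/(\text{linear in }t)$, with the optimal $t$ (or the optimal auxiliary exponent) producing the factor $(1-p)/(1-p+2r)$ raised to the power $1-p$ and the prefactor $2-p+r$. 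I would make the ansatz explicit, differentiate to locate the extremum, and verify that the resulting bound holds for all $n$ and all admissible ratios $t \in (0,1)$, using the monotonicity available from $0<r\le p<1$.

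The main obstacle I anticipate is twofold. First, controlling the boundary/telescoping remainder terms: because the sums are infinite and $A_n \to 0$, one must check that no boundary contribution is lost (or gained) at infinity, which requires a mild a priori decay argument on $A_n$ — standard, but it must be done. Second, and more seriously, verifying that the chosen constants $\lambda_n,\mu_n$ actually make the pointwise inequality true \emph{uniformly} in $n$: since the weights $n^{p-r}$ and $n^{-r}$ are not homogeneous, a naive scaling reduction is unavailable, and one typically must compare $n^{p-r}$ against an interpolation of $n^{-r}$ and $(n+1)^{-r}$, which forces an auxiliary convexity or mean-value estimate (e.g.\ bounding $(n+1)^{-r} - n^{-r}$). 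Getting the discrete weights to cooperate with the optimized $t$-extremum — so that the same constant $c_{p,r}$ works at every $n$ rather than only in an asymptotic limit — is where the real work lies; the restriction $r \le p$ should be exactly what makes this uniform comparison go through.
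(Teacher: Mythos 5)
Your plan is a strategy outline rather than a proof, and the gap is precisely where all the content of Theorem \ref{thm5} lies: the explicit constant. You propose the classical telescoping route (write $A_n=\sum_{k\ge n}a_k$, seek $n^{p-r}(A_n-A_{n+1})^p\le \lambda_nA_n^p-\mu_nA_{n+1}^p$ with $\lambda_n-\mu_{n-1}\le c_{p,r}^{-1}n^{-r}$, optimize over $t=A_{n+1}/A_n$), which is plausible in spirit, but you never specify $\lambda_n,\mu_n$, never prove the resulting one-variable inequality in $t\in[0,1]$ for every $n$, and never show that the worst case over $n$ occurs in the limit $n\to\infty$ and evaluates to exactly $(2-p+r)\big(\frac{1-p}{1-p+2r}\big)^{1-p}$. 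You yourself flag the uniformity in $n$ as ``where the real work lies,'' and the hypothesis $0<r\le p$ is invoked only as a hope (``should be exactly what makes this go through'') rather than used in any argument. Incidentally, the boundary-term worry you raise is the easy part (in this direction one simply drops $-\mu_NA_{N+1}^p\le 0$ before letting $N\to\infty$); it is the pointwise estimate with the optimized constant, uniform in $n$, that is missing, and without it there is no proof.

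For comparison, the paper does not telescope at all: it reruns the first proof of Theorem \ref{thm1} (reversed H\"older's inequality as in \eqref{3.1}) on the transformed inequality \eqref{4.0}, replacing the weights $k^{r-1/p}-(k+1)^{r-1/p}$ by $k^{-\beta}-(k+1)^{-\beta}$, which reduces everything to bounding from below the quantity in \eqref{4.02}, namely $\min_{k\ge 1}\big(k^{-\beta}-(k+1)^{-\beta}\big)^{1-p}k^{r-p}\sum_{n=1}^kn^{\beta(1-p)-r}$. The partial sum is handled by the Levin--Ste\v ckin estimate \eqref{4.2}, which requires $0\le\beta(1-p)-r\le 1$; with the choice $\beta=1+2r/(1-p)$ the upper constraint is exactly $r\le p$ --- this, not a ratio-uniformity consideration, is where the hypothesis enters. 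Finally, Lemma \ref{lem1} shows $f_{p,r,\beta}(1/k)$ is increasing in $1/k$, so the minimum over $k$ is the limit $k\to\infty$, giving $c_{p,r}\ge\beta^{1-p}/(1+\beta(1-p)-r)$, i.e.\ the stated constant. If you wish to salvage your route, the decisive missing steps are: an explicit ansatz for $\lambda_n,\mu_n$ (as concrete functions of $n$), verification of the $t$-inequality for all $n$ with a single constant, and identification of the extremal $n$; none of these is present in the proposal.
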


     We note here the constant $c^{-1}_{p,r}$ in the statement of
     Theorem \ref{thm5} is nothing but the constant $\chi(r)$ defined in
     Lemma 5 in the proof of Theorem 62 in \cite{L&S}. We now say
     a few words on how to deduce Theorem \ref{thm4} from Theorem
     \ref{thm5}, this is also given in Theorem 62 of \cite{L&S}.
     First, it is easy to show that for fixed $p$, $c^{-1}_{p,r}$
     is minimized at $r=(3-2p)(1-p)/2p$. When $1/3 \leq p \leq
     3/5$, we have $p \leq (3-2p)(1-p)/2p$, hence on setting $r=p$
     in Theorem \ref{thm5} implies the corresponding cases of Theorem
     \ref{thm4}. When $3/5 \leq p <1$, we have $p \geq (3-2p)(1-p)/2p$ and a combination of
     Lemma 7 in the proof of Theorem 62 in \cite{L&S} and setting $r = (3-2p)(1-p)/2p$ in Theorem \ref{thm5} implies the
corresponding cases of Theorem
     \ref{thm4}.


    Our method in proving Theorem \ref{thm4} and Theorem \ref{thm5} is more flexible and there is still room to further improve
    the constant $c_p$ or $c_{p,r}$, when they are not best possible. In this paper, we shall only consider the constant $c_{1/2}$ of the special case $p=1/2$ in
    \eqref{1}. It's given as $1/\sqrt{2}$ in \cite{HLP} and was
    improved to be $\sqrt{3}/2$ by Levin and Ste\v ckin in
    \cite{L&S}. The author has shown in \cite{G6} that one can
    take $c_{1/2}=0.8967$ but at that time he was not aware that
    Boas and de Bruijn \cite{B&dB} showed that $15/17 \approx 0.8824 < c_{1/2}
    <1/1.08 \approx 0.9259$ and De Bruijn \cite{deB} showed that
    $c_{1/2} \approx 1/1.1064957714 \approx 0.90375$. With less
    effort than de Bruijn's analysis in \cite{deB}, we shall use our approach in this
    paper to show in Section \ref{sec 4} that we can take $c_{1/2}$ to be $0.9$, which coincides with the optimal $c_{1/2}$ for the
    first two decimal expansions. This is given in the
    following
\begin{theorem}
\label{thm6}
    Inequality \eqref{2.02} holds when $p=1/2$ with $c_{1/2}=0.9$.
\end{theorem}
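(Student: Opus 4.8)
\textbf{Proof proposal for Theorem \ref{thm6}.}

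The plan is to specialize the method developed for Theorem \ref{thm1} and Theorem \ref{thm5} to the case $p=1/2$, $r=p=1/2$, where inequality \eqref{2.02} becomes
\begin{equation*}
   \sum^{\infty}_{n=1}\Big ( \frac{1}{\sqrt{n}}\sum^{\infty}_{k=n} 2\Big(\sqrt{k+1}-\sqrt{k}\Big)a_k \Big)^{1/2}
   \geq c_{1/2}\sum^{\infty}_{n=1}\sqrt{a_n},
\end{equation*}
and we aim to show this holds with $c_{1/2}=0.9$. The standard device (as in the Levin--Ste\v ckin approach and in \cite{G9}) is to look for a suitable positive sequence $(\lambda_n)$ and reduce the inequality, via Hölder's inequality applied to the inner sum, to a pointwise estimate of the form
\begin{equation*}
   \sum_{k=n}^{\infty} 2\big(\sqrt{k+1}-\sqrt{k}\big)\,\mu_k \cdot (\text{weight})_{k,n} \;\geq\; c_{1/2}^{2}\,\frac{\lambda_n^{?}}{\sqrt{n}}\cdot(\cdots),
\end{equation*}
so that after summation the right-hand side telescopes or dominates $\sum\sqrt{a_n}$ with the claimed constant. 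Concretely, I would mimic the proof of Theorem \ref{thm5} but, instead of using the crude comparison $(k+1)^{r}-k^{r}\le$ (a power of $k$) that forces the constant $\chi(r)=c_{p,r}^{-1}$, retain the exact term $2(\sqrt{k+1}-\sqrt{k})$ and exploit the sharper two-term expansion $2(\sqrt{k+1}-\sqrt{k}) = 1/\sqrt{k} - \tfrac{1}{4}k^{-3/2}+O(k^{-5/2})$ to get a better numerical constant than the one in Theorem \ref{thm5} evaluated at $p=r=1/2$ (which would only give $c_{1/2}^{-1}=(3/2)(1/2)^{1/2}\cdot\ldots$, i.e. the Levin--Ste\v ckin value $\sqrt{3}/2$).

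The key steps, in order, are: (1) By duality (Theorem \ref{thm2}) or directly, set up the equivalent finite-section formulation and apply Hölder with an exponent pair determined by $p=1/2$; (2) choose a trial sequence of the form $\lambda_n = n^{s}$ (or a small perturbation thereof, e.g. $\lambda_n = (n-c)^{s}$ or $n^s + $ correction, with $s$ near the value that is optimal in the continuous model, namely related to $1-rp=3/4$) and reduce to verifying an inequality of the shape $f(n)\ge 0$ for all integers $n\ge 1$; (3) split into a finite range $1\le n\le N$ checked by direct (rigorous, interval-arithmetic if needed) computation and a tail $n>N$ handled by monotonicity/asymptotic estimates on $f$, using the expansion of $\sqrt{k+1}-\sqrt{k}$ and comparison of the sum $\sum_{k\ge n}$ with the integral $\int_n^\infty$; (4) optimize the free parameter(s) in the trial sequence to push the resulting constant down to $0.9$, and verify $0.9 \le$ (the constant produced). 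Because $0.9$ is strictly below the true optimal constant $\approx 0.90375$ of de Bruijn, there is genuine slack, so the trial sequence need not be optimal — this is what makes "less effort than de Bruijn's analysis" possible.

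The main obstacle I expect is step (3)–(4): controlling the error terms in the Euler–Maclaurin / Taylor expansions well enough, and over a large enough initial range, to certify the constant $0.9$ rather than merely something like $0.89$. In particular the tail estimate must be uniform and the finite check must cover enough terms $n$ that the asymptotic regime has genuinely taken over; choosing the perturbation in the trial sequence $\lambda_n$ cleverly (so that the leading correction term in $f(n)$ has a favorable sign for all $n$) is the crux. A secondary technical point is that several of the sums involve $\sqrt{k+1}-\sqrt{k}$ rather than a clean power of $k$, so the telescoping is not exact and one must absorb the discrepancy into the slack between $0.9$ and the optimal value; I would handle this by bounding $2(\sqrt{k+1}-\sqrt{k}) \le k^{-1/2}$ on one side and $\ge k^{-1/2} - \tfrac14 k^{-3/2}$ on the other and checking that the resulting two-sided estimate still leaves room for $c_{1/2}=0.9$.
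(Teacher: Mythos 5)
There is a genuine gap, and it begins with the target inequality itself. You read Theorem \ref{thm6} as inequality \eqref{2.02} with $r=p=1/2$, i.e.\ with the factor $2(\sqrt{k+1}-\sqrt{k})$ inside the inner sum and $n^{-1/2}$ outside. That inequality cannot hold with constant $0.9$: taking $a_k=k^{-s}$ and letting $s\to 2^+$, one has $\sum_{k\ge n}2(\sqrt{k+1}-\sqrt{k})k^{-s}\sim n^{1/2-s}/(s-1/2)$, so the ratio of the left to the right side tends to $(s-1/2)^{-1/2}\to\sqrt{2/3}\approx 0.816<0.9$. As the surrounding discussion (Boas--de Bruijn, Levin--Ste\v ckin) and the proof in Section \ref{sec 4} make clear, the theorem concerns inequality \eqref{1} at $p=1/2$, equivalently \eqref{4.1} with $r=p=1/2$: the weight $1/n$ sits inside the power and there is no factor $(k+1)^r-k^r$. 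De Bruijn's value $0.90375$ is the optimal constant for that inequality, not for the one you wrote down, so the ``slack below $0.90375$'' your argument leans on is not available in your formulation.

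Second, even for the correct inequality your text is a plan rather than a proof: the choice of auxiliary sequence, the monotonicity lemma, and the numerical certification of $0.9$ --- which together constitute the entire content of the theorem --- are all deferred (``optimize the free parameter(s)\dots'', ``interval arithmetic if needed''), and you yourself flag that the method might only certify something like $0.89$. For comparison, the paper runs the first proof of Theorem \ref{thm1} with the auxiliary sequence $k^{-\beta}-(k+1)^{-\beta}$ in place of $k^{r-1/p}-(k+1)^{r-1/p}$ (the same device as in the proof of Theorem \ref{thm5}), which reduces the claim to $c_{1/2}\ge\min_{k\ge1}s_k$ with $s_k=\big(k^{-\beta}-(k+1)^{-\beta}\big)^{1/2}\sum_{n=1}^{k}n^{(\beta-1)/2}$; balancing $s_1$ against $\lim_{k\to\infty}s_k$ suggests $\beta\approx 2.47$, and the concrete choice $\beta=2.4$, together with \eqref{4.2} and an elementary monotonicity argument for $u_{2.4}(x)=x^{-1}\big((1+x)^{1.4}-(1+x)^{-1}\big)$ on $[0,1/3]$, gives $s_k\ge\frac{2}{3.4}u_{2.4}^{1/2}(1/11)\approx 0.9001$ for $k\ge 11$, leaving only $s_1,\dots,s_{10}$ to be checked directly. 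So the finite-check-plus-tail-monotonicity shape of your step (3) is right in spirit, but no Euler--Maclaurin expansions, duality, or large verification ranges are needed; what is missing is the correct inequality and the explicit parameter and estimates that actually certify $0.9$.
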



   We note the following result:
\begin{theorem}
\label{thm7}
    Let $a_n>0$ and $\alpha > 0$. Then for $p<0$ or $p \geq 1$ and $\alpha p > 1$,
    we have
\begin{equation}
\label{1.5}
   \sum^{\infty}_{n=1}\Big{(}\frac {1}{n^{\alpha}} \sum^n_{k=1}(k^{\alpha}-(k-1)^{\alpha})a_k\Big{)}^p
\leq \Big(\frac {\alpha p}{\alpha p-1}
\Big)^p\sum^\infty_{k=1}a^p_k.
\end{equation}
    The constant is best possible.
\end{theorem}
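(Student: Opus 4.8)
Throughout write $L=\alpha p/(\alpha p-1)$, $\lambda_k=k^{\alpha}-(k-1)^{\alpha}$, $\Lambda_n=n^{\alpha}=\sum_{k=1}^{n}\lambda_k$ and $A_n=\Lambda_n^{-1}\sum_{k=1}^{n}\lambda_ka_k$, so that \eqref{1.5} is the assertion $\sum_{n=1}^{\infty}A_n^{p}\le L^{p}\sum_{n=1}^{\infty}a_n^{p}$. Set $\mu=\alpha-1/p$; under the stated hypotheses ($p\ge 1$ with $\alpha p>1$, or $p<0$) one has $\mu>0$, and the conjugate index $q=p/(p-1)$ is $>1$, $=\infty$, or in $(0,1)$ respectively. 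The plan is the standard weighted-H\"older argument for weighted mean matrices: estimate each $A_n^{p}$ by applying H\"older's inequality (in the appropriate, possibly reverse, form) to $\sum_{k=1}^{n}\lambda_ka_k=\sum_{k=1}^{n}(\lambda_kw_k)(a_k/w_k)$ for a suitable positive weight $w_k$, then sum over $n$ and interchange the order of summation, which is legitimate because $a_k>0$ makes every term nonnegative.

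The weight I would use is the discrete analogue of the optimal weight $t^{-1/p}$ in the continuous model operator $a\mapsto x^{-\alpha}\int_{0}^{x}\alpha t^{\alpha-1}a(t)\,dt$ (for which the same argument yields the constant $L$ exactly, since $\int_{0}^{x}\alpha t^{\alpha-1}t^{-1/p}\,dt=(\alpha/\mu)x^{\mu}$): namely, choose $w_k$ so that $\lambda_kw_k=(k^{\mu}-(k-1)^{\mu})/\mu$, making $\sum_{k=1}^{n}\lambda_kw_k=n^{\mu}/\mu$ telescope. Then H\"older gives, using $p/q=p-1$,
\begin{equation*}
A_n^{p}\le\Lambda_n^{-p}\Big(\sum_{k=1}^{n}\lambda_kw_k\Big)^{p-1}\sum_{k=1}^{n}\lambda_kw_k^{1-p}a_k^{p}=\mu^{1-p}n^{\mu(p-1)-\alpha p}\sum_{k=1}^{n}\lambda_kw_k^{1-p}a_k^{p},
\end{equation*}
and a one-line computation gives $\mu(p-1)-\alpha p=-(\mu+1)$. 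Summing over $n$, interchanging the summations and substituting for $w_k$, the inequality \eqref{1.5} is reduced to the single coefficient-wise estimate
\begin{equation}
\label{eq:planstar}
\big(k^{\alpha}-(k-1)^{\alpha}\big)^{p}\big(k^{\mu}-(k-1)^{\mu}\big)^{1-p}\sum_{n=k}^{\infty}n^{-(\mu+1)}\le\Big(\frac{\alpha}{\mu}\Big)^{p}\qquad(k\ge 1).
\end{equation}
For $p<0$ the first step uses the reverse H\"older inequality (with $q\in(0,1)$) and then raises the resulting reversed inequality to the negative power $p$, which flips it back, so the same bound on $A_n^{p}$ — and hence the same reduction to \eqref{eq:planstar} — results; for $p=1$ the H\"older step is vacuous and \eqref{eq:planstar} is simply the statement $\lambda_k\sum_{n\ge k}n^{-\alpha}\le\alpha/(\alpha-1)$ obtained from $\sum_nA_n=\sum_k\lambda_ka_k\sum_{n\ge k}n^{-\alpha}$.

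The remaining task is to prove \eqref{eq:planstar}. I would write $k^{\beta}-(k-1)^{\beta}=\beta\int_{k-1}^{k}t^{\beta-1}\,dt$ and use $\alpha-1=(\mu-1)+1/p$ (so $t^{\alpha-1}=t^{\mu-1}t^{1/p}$) to compare $k^{\alpha}-(k-1)^{\alpha}$ against $(k^{\mu}-(k-1)^{\mu})^{1/q}\big(\int_{k-1}^{k}t^{\mu}\,dt\big)^{1/p}$, thereby reducing \eqref{eq:planstar} to a clean comparison between $\int_{k-1}^{k}t^{\mu}\,dt$ and the tail $\sum_{n\ge k}n^{-(\mu+1)}$, the latter being controlled by $\int_{k-1/2}^{\infty}t^{-(\mu+1)}\,dt=\mu^{-1}(k-\tfrac12)^{-\mu}$ (valid since $t^{-(\mu+1)}$ is convex), with the small values of $k$ — above all $k=1$, where $\sum_{n\ge 1}n^{-(\mu+1)}=\zeta(\mu+1)$ has to be bounded against $(\alpha/\mu)^{p}$ — treated by hand. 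I expect this to be the main obstacle: both near $k=1$ and for parameters far from the diagonal (e.g.\ $\mu>1$, or $p$ large negative) the crude integral comparisons are only barely insufficient, so one must sharpen them, either by a refined Euler--Maclaurin estimate of the tail or by a short case analysis; everything before this point is routine. Finally, the constant $L^{p}$ is best possible: running the test sequence $a_n=n^{-1/p}$ over a long block $N\le n\le M$ and letting $N\to\infty$, one has $\Lambda_n^{-1}\sum_{k=1}^{n}\lambda_kk^{-1/p}\sim(\alpha/\mu)n^{-1/p}$, so the ratio of the two sides of \eqref{1.5} tends to $(\alpha/\mu)^{p}=L^{p}$ — the same construction as in \cite{G9} for the analogous sharpness assertions, valid in every range of $p$ treated here.
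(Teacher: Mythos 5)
Your overall strategy is sound and is essentially the same weighted-H\"older factorization the paper uses in Section \ref{sec 5}; the difference is only in where the telescoping is placed. You choose $w_k$ so that the inner sum telescopes, $\sum_{k\le n}\lambda_kw_k=n^{\mu}/\mu$ with $\mu=\alpha-1/p$, and push all the work into a coefficient-wise bound; the paper instead takes $W_k=(k^{\alpha}-(k-1)^{\alpha})^{-p}\big(\sum_{i\ge k}i^{-(\mu+1)}\big)^{-1}$, so that the outer tail sum cancels exactly, and then controls $\sum_{i\le n}W_i^{1/(1-p)}$ by the same continuous H\"older comparison you propose together with Bennett's inequality \eqref{5.01}. (Your variant mirrors the paper's first proof of Theorem \ref{thm1}, transposed from remainders to head sums.) Your treatment of $p<0$ via reverse H\"older and the sharpness argument are fine, and your reduction and exponent arithmetic are correct.

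The genuine gap is the coefficient-wise estimate itself, which is the entire analytic content and is left unproven. After your own comparison $k^{\alpha}-(k-1)^{\alpha}\le\alpha\big(\int_{k-1}^{k}t^{\mu}\,dt\big)^{1/p}\big(\int_{k-1}^{k}t^{\mu-1}\,dt\big)^{1/q}$ (valid in all your ranges of $p$, and the same step the paper performs in Section \ref{sec 5}), what remains is exactly
\begin{equation*}
\big(k^{\mu+1}-(k-1)^{\mu+1}\big)\sum_{n\ge k}\frac{1}{n^{\mu+1}}\;\le\;\frac{\mu+1}{\mu}\qquad(k\ge1,\ \mu>0),
\end{equation*}
i.e.\ inequality \eqref{5.01}, which the paper quotes from Bennett. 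This inequality is an equality up to $O(k^{-2})$ as $k\to\infty$, so there is no room for the losses your proposed comparisons introduce: bounding the tail by $\int_{k-1/2}^{\infty}t^{-(\mu+1)}\,dt$ reduces the claim to $\int_{k-1}^{k}t^{\mu}\,dt\le(k-\tfrac12)^{\mu}$, which is \emph{false} for every $k$ whenever $\mu>1$ (the midpoint rule for a convex integrand goes the other way), and $\mu>1$, i.e.\ $\alpha>1+1/p$, sits squarely inside the theorem's hypotheses (e.g.\ every $\alpha\ge2$ with $p\ge1$, and essentially all $\alpha$ when $p<0$). By contrast the point you flag as the main worry, $k=1$, is harmless: there the reduction only needs $\zeta(1+\mu)\le1+1/\mu$, which follows from the plain integral test. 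So as written the plan does not close; it closes immediately if you invoke Bennett's inequality \eqref{5.01} (as the paper does) or supply an independent proof of it, but that lemma is a sharp statement and not a routine Euler--Maclaurin exercise.
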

     The special case $p>1, \alpha \geq 1, \alpha p >1$ of
    inequality \eqref{1.5}
    was proved by the author in \cite{G}. The general cases
    of inequality \eqref{1.5}
    were proved by Bennett in \cite{Be1}.

    It's easy to show that we
     have, for $\alpha>0, r \geq 1$, $p \geq 1$ that when $k \geq 1$,
\begin{equation*}
    k^{(1-r)/p}\Big
   (\frac {k^{\alpha}-(k-1)^{\alpha}}{\alpha} \Big ) \leq  \frac
{k^{\alpha+(1-r)/p}-(k-1)^{\alpha+(1-r)/p}}{\alpha+(1-r)/p}.
\end{equation*}
    The above inequality reverses when $p<0$. Replacing $\alpha$
    with $\alpha+(1-r)/p$ in \eqref{1.5} and applying the above
    inequality, we deduce immediately the following (via a change
    of variables $a_n \rightarrow n^{(r-1)/p}a_n$) result (\cite[Theorem
    1]{Be1}):
\begin{cor}
\label{cor7}
    Let $a_n>0$. Suppose that $\alpha > 0$ and $r \geq 1$. Then for $p<0$ or $p \geq 1$ and $\alpha p > r$,
    we have
\begin{equation*}
   \sum^{\infty}_{n=1}n^{r-1}\Big{(}\frac {1}{n^{\alpha}} \sum^n_{k=1}(k^{\alpha}-(k-1)^{\alpha})a_k\Big{)}^p
\leq \Big(\frac {\alpha p}{\alpha p-r}
\Big)^p\sum^\infty_{k=1}k^{r-1}a^p_k.
\end{equation*}
    The constant is best possible.
\end{cor}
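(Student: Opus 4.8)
The plan is to read Corollary \ref{cor7} off from Theorem \ref{thm7} together with the elementary pointwise inequality displayed just before the statement, exactly as the text indicates. Write $\alpha'=\alpha+(1-r)/p$. First I would check that Theorem \ref{thm7} is applicable with $\alpha$ replaced by $\alpha'$. When $p\ge1$ and $\alpha p>r$ we have $\alpha' p=\alpha p+1-r>1$, so in particular $\alpha'>0$ and the hypothesis $\alpha' p>1$ is met. When $p<0$, the assumption $r\ge1$ gives $(1-r)/p\ge0$, hence $\alpha'\ge\alpha>0$, and the $p<0$ branch of Theorem \ref{thm7} requires nothing more. So $\alpha'>0$ throughout; the $0/0$ convention never enters and each difference $k^{\alpha'}-(k-1)^{\alpha'}$ is positive.

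Next I would apply Theorem \ref{thm7} with $\alpha'$ in place of $\alpha$ to the sequence $c_k=k^{(r-1)/p}a_k$ and insert the pointwise bound
\[
 k^{(1-r)/p}\,\frac{k^{\alpha}-(k-1)^{\alpha}}{\alpha}\ \le\ \frac{k^{\alpha'}-(k-1)^{\alpha'}}{\alpha'}\qquad(k\ge1),
\]
which reverses for $p<0$; it follows at once by writing each side as $\int_{k-1}^{k}t^{\alpha-1}t^{(1-r)/p}\,dt$ versus $k^{(1-r)/p}\int_{k-1}^{k}t^{\alpha-1}\,dt$ and comparing $t^{(1-r)/p}$ with $k^{(1-r)/p}$ on $[k-1,k]$, the exponent being $\le0$ for $p\ge1$ and $\ge0$ for $p<0$ since $r\ge1$ (the integral converges at $0$ when $k=1$ because $\alpha'>0$). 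Since $k^{(1-r)/p}c_k=a_k$, $n^{\alpha'}=n^{\alpha}n^{(1-r)/p}$, and $(n^{(1-r)/p})^{p}=n^{1-r}$, raising to the $p$-th power and summing over $n$ converts Theorem \ref{thm7} for $(c_k)$ into
\[
 \Big(\frac{\alpha'}{\alpha}\Big)^{p}\sum_{n=1}^{\infty}n^{r-1}\Big(\frac{1}{n^{\alpha}}\sum_{k=1}^{n}(k^{\alpha}-(k-1)^{\alpha})a_k\Big)^{p}\ \le\ \Big(\frac{\alpha' p}{\alpha' p-1}\Big)^{p}\sum_{k=1}^{\infty}k^{r-1}a_k^{p},
\]
the right side using $\sum_k c_k^{p}=\sum_k k^{r-1}a_k^{p}$.

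Finally I would clear the constant: dividing by $(\alpha'/\alpha)^{p}$, which is positive so the direction is preserved, and noting $\alpha' p-1=\alpha p-r$, the prefactor becomes $(\alpha/\alpha')^{p}\big(\alpha' p/(\alpha' p-1)\big)^{p}=\big(\alpha p/(\alpha p-r)\big)^{p}$, which is precisely the inequality asserted in Corollary \ref{cor7}. For sharpness I would use the standard extremal power sequences $a_k=k^{-\gamma}$ with $\gamma\to r/p$ --- taken slightly above $r/p$ when $p>0$ and slightly below when $p<0$, so that the relevant series just converges --- as in the constructions of \cite{G9} and \cite[Theorem 1]{Be1}. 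The step that needs real care is the $p<0$ case: there the pointwise comparison reverses, $x\mapsto x^{p}$ reverses, and the factor $(\alpha'/\alpha)^{p}$ is now at most $1$ rather than at least $1$; these reversals must be checked to compose so that the final inequality still points the way claimed. I expect this bookkeeping of inequality directions, not any analytic difficulty, to be the principal obstacle.
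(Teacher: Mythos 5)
Your proposal is correct and follows exactly the paper's route: apply Theorem \ref{thm7} with $\alpha$ replaced by $\alpha+(1-r)/p$, combine it with the pointwise comparison of $k^{(1-r)/p}(k^{\alpha}-(k-1)^{\alpha})/\alpha$ with $(k^{\alpha+(1-r)/p}-(k-1)^{\alpha+(1-r)/p})/(\alpha+(1-r)/p)$ (reversed for $p<0$), and change variables $a_n\rightarrow n^{(r-1)/p}a_n$. You merely fill in details the paper leaves implicit (the integral proof of the pointwise inequality, the bookkeeping of sign reversals for $p<0$, and a sketch of sharpness), so no further comment is needed.
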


     We point out here that we will present two proofs of Theorem \ref{thm1} in
    Section \ref{sec 3}. The first one can be viewed as an analogue to Bennett's proof of Theorem
    \ref{thm7} and the second one is a
    generalization of the proof of inequality \eqref{4.1} given in \cite{L&S}. One then
    asks whether one can adapt the approach used in the second proof of Theorem \ref{thm1} to give another proof
    of Theorem \ref{thm7} and this is indeed
    possible as we will give an alternative proof of Theorem
    \ref{thm7} in Section \ref{sec 5}.

    As it is pointed out in \cite{Be1} that inequality \eqref{1.5} fails to hold when $\alpha p \leq
     1$. One therefore wonders whether there are any analogues of
     inequality \eqref{1.5} that hold when $\alpha p \leq 1$. For this we note
     that it follows from Theorem \ref{thm1} that the reversed inequality \eqref{2.02}
    holds when $p \geq 1$ or $p<0$ under certain restrictions on
    $r$. One may view these reversed inequalities as the $\alpha p \leq 1$ analogues to inequality
    \eqref{1.5}. However, the duality principle also allows one to interpret these inequalities as $\alpha p>1$ (with a different $\alpha$)
    analogues to \eqref{1.5}. To see this, we take the $p > 1$ case in Theorem
    \ref{thm1} as an example and we use its dual version,
    Theorem \ref{thm2} with $p>1, 1/p-2 \leq r <1/p$. We interchange the variables $p$ and $q$ and replace $r$ by
    $-r$ to recast inequality \eqref{1.05} for this case as ($1/p-1 < r \leq
   1+1/p$):
\begin{equation*}
     \sum^{\infty}_{n=1}\Big ( \Big ( \frac {n^{-r}-(n+1)^{-r}}{r}\Big )\sum^{n}_{k=1}k^{r}a_k \Big
   )^p \leq \Big ( \frac {p}{(r+1)p-1}\Big
   )^{p}\sum^{\infty}_{n=1}a^p_n.
\end{equation*}
     Note that the above inequality is analogue to inequality \eqref{1.5} in the sense that
     we have $(r+1)p>1$ here. We can further recast the above
     inequality as
\begin{equation}
\label{1.7}
     \sum^{\infty}_{n=1}\Big ( \Big ( \frac {n^{-r}-(n+1)^{-r}}{r}\Big
     )(1+r)\Big(\sum^n_{i=1}i^r \Big)
     \frac {1}{\sum^n_{i=1}i^r}\sum^{n}_{k=1}k^{r}a_k \Big
   )^p \leq \Big ( \frac {(r+1)p}{(r+1)p-1}\Big
   )^{p}\sum^{\infty}_{n=1}a^p_n.
\end{equation}
     The above inequality and inequality \eqref{bound1} below
     imply immediately the following inequality for $a_n>0, p>1, 2 \leq \alpha \leq 2+1/p$:
\begin{equation}
\label{8}
   \sum^{\infty}_{n=1}\Big{(}\frac
1{\sum^n_{i=1}i^{\alpha-1}}\sum^n_{i=1}i^{\alpha-1}a_i\Big{)}^p
\leq  \Big(\frac {\alpha p}{\alpha p-1} \Big
)^p\sum^{\infty}_{n=1}a^p_n.
\end{equation}

     The above inequality has been studied in \cite{G}, \cite{Be1}, \cite{G6}, \cite{G8} and \cite{G5}. The author \cite{G} and Bennett \cite{Be1}
     proved inequality \eqref{8} for $p>1, \alpha \geq 2$ or $0< \alpha \leq 1, \alpha p
>1$
     independently. The author \cite{G6} has shown that \eqref{8} holds for $p \geq 2, 1 \leq
   \alpha \leq 1+1/p$ or $1 < p \leq 4/3, 1+1/p \leq
   \alpha \leq 2$. Recently, the author \cite{G5} has shown that inequality
   \eqref{8} holds for $p \geq 2$, $0 \leq \alpha \leq 1$. In
   \cite[Corollary 2.4]{G8}, it is shown that inequality \eqref{8}
   holds for $\alpha>0, p<0$.

     Other than the above point of view of the reversed inequality of
    \eqref{2.02} using the duality principle, we may also regard the (reversed)
     inequality of
    \eqref{2.02} as a type of ``weighted remainder form of
    Hardy-type
    inequalities", a terminology we use after Pe\v cari\'c and Stolarsky, who studied a special case of this type of inequalities in
    \cite[Sec 3]{P&S}. Theorem \ref{thm1} thus leads naturally to the study of the
    following weighted remainder form of
    Hardy-type inequalities in general:
\begin{equation}
\label{6.1}
    \sum^{\infty}_{n=1}\Big (\sum^{\infty}_{k=n}\frac
   {\lambda_ka_k}{\Lambda_n}\Big )^p \leq \Big (\frac {p}{p-L}
   \Big )^p\sum^{\infty}_{n=1}a^p_n,
\end{equation}
   where $(\lambda_n)$ is a positive sequence satisfying
   $\Lambda_n=\sum^{\infty}_{k=n}\lambda_k<+\infty$ and $L$ is a number such that $L<p$ when $p>0$ and $L>p$ when
   $p<0$. We want the above
inequality to hold for $p > 1$ or
   $p<0$ and any positive sequence $(a_n)$ satisfying $\sum^{\infty}_{n=1}a^p_{n}<+\infty$.
   We also want the reversed inequality of \eqref{6.1} to hold when $0<p<1$. We shall study inequality \eqref{6.1} in Section \ref{sec 6}.
   We shall find conditions on the $\lambda_n$'s
   so that inequality \eqref{6.1} (or its reverse) can hold under
   these conditions.

\section{A heuristic approach to inequality \eqref{2.02}}
\label{sec 2} \setcounter{equation}{0}

   In this section, we first give a heuristic approach towards establishing inequality \eqref{2.02}. This approach will provide motivation and
   serve as a guideline for our proof of Theorem \ref{thm1} later. In fact, the approach we discuss here is in
some sense a ``natural" approach towards establishing Hardy-type
inequalities. For simplicity, we consider inequality \eqref{2.02}
for $0<p<1, r>0, rp <1$. A general approach towards establishing
the above inequality is to apply the reversed H\"older's
    inequality to get
\begin{equation}
\label{2.2'}
  \Big(\sum^{\infty}_{k=n}\Big((k+1)^{r}-k^{r}\Big
)a_k \Big )^p \geq
\Big(\sum^{\infty}_{k=n}W^{1/(1-p)}_k\Big)^{p-1}\Big(\sum^{\infty}_{k=n}W_k\big((k+1)^{r}-k^{r}\big
)^pa^p_k \Big),
\end{equation}
   where $(W_k)$ is a sequence to be determined. A general
   discussion on Hardy-type inequalities in
   \cite{G5} implies that one can in fact obtain the best possible constant on choosing $W_k$ properly.

   We now give a description of one choice for the $W_k$'s in \eqref{2.2'}. In fact, more naturally,
   we write
   $\big((k+1)^{r}-k^{r}\big )a_k=\big((k+1)^{r}-k^{r}\big )k^{-\gamma}k^{\gamma}a_k$,
    so that
   by the reversed H\"older's inequality (and one can reconstruct the $W_k$'s from this), we have
\begin{equation*}
  \Big(\sum^{\infty}_{k=n}\big((k+1)^{r}-k^{r}\big )a_k \Big )^p \geq
\Big(\sum^{\infty}_{k=n}\big((k+1)^{r}-k^{r}\big
)^{p/(p-1)}k^{-\gamma p
/(p-1)}\Big)^{p-1}\Big(\sum^{\infty}_{k=n}k^{\gamma
   p}a^p_k \Big),
\end{equation*}
   where $\gamma <r-1/p<0$ (this guarantees the finiteness of the two factors of the right-hand side expressions above) is a parameter to be chosen later.
   Using this, we
   then have
\begin{eqnarray*}
 &&  \sum^{\infty}_{n=1}\Big ( \frac {1}{n^{r}}\sum^{\infty}_{k=n}\Big(\frac{(k+1)^{r}-k^{r}}{r}\Big
)a_k \Big
   )^p  \\
 &\geq &  \sum^{\infty}_{n=1}\frac
{\Big(\sum^{\infty}_{k=n}\big((k+1)^{r}-k^{r}\big
)^{p/(p-1)}k^{-\gamma p
/(p-1)}\Big)^{p-1}\Big(\sum^{\infty}_{k=n}k^{\gamma
   p}a^p_k \Big)}{r^pn^{rp}} \nonumber \\
   & =&  \sum^{\infty}_{k=1}a^p_kk^{\gamma p}\sum^{k}_{n=1}\frac
   {\Big(\sum^{\infty}_{i=n}\big((i+1)^{r}-i^{r}\big
)^{p/(p-1)}i^{-\gamma p/(p-1)}\Big)^{p-1}}{r^pn^{rp}}.
\end{eqnarray*}
   Asymptotically, we have
\begin{eqnarray}
\label{2.5}
 & &\sum^{\infty}_{i=n}\big((i+1)^{r}-i^{r}\big
)^{p/(p-1)}i^{-\gamma p/(p-1)} \\
&\sim & r^{p/(p-1)}
\sum^{\infty}_{i=n}i^{(r-1-\gamma) p/(p-1)} \nonumber \\
&\sim & \frac
   {r^{p/(p-1)}n^{(r-1-\gamma) p/(p-1)+1}}{(\gamma+1-r)
   p/(p-1)-1}. \nonumber
\end{eqnarray}
   It follows that asymptotically, we have
\begin{eqnarray*}
  && k^{\gamma p}\sum^{k}_{n=1}\frac
   {\Big(\sum^{\infty}_{i=n}\big((i+1)^{r}-i^{r}\big
)^{p/(p-1)}i^{-\gamma p/(p-1)}\Big)^{p-1}}{r^pn^{rp}}   \\
   &\sim & \frac {1}{((\gamma+1-r) p/(p-1)-1)^{p-1}}k^{\gamma p}\sum^{k}_{n=1}\frac
   {1}{n^{1+\gamma p}} \\
   & \sim & -\frac {1}{((\gamma+1-r) p/(p-1)-1)^{p-1}}\frac {1}{\gamma p}.
\end{eqnarray*}
    We then want to choose $\gamma$ so that the last expression
    above is maximized and calculation shows that in this case we
    need to take $\gamma=(rp-1)/p^2 (< r-1/p) $ and the so taken
    $\gamma$ makes the value of the last expression above being
    exactly the constant appearing on the right-hand side of
    \eqref{2.02}.

    The above approach can be applied to discuss inequality
    \eqref{1.05} similarly and in this case, we can make our argument
    rigorous to give a proof of Theorem \ref{thm2}.

    Proof of Theorem \ref{thm2}:

    Due to the similarities of the proofs (taken into account the reversed inequality of \eqref{bound1}), we may assume $0<p<1$
    here. By the reversed H\"older's inequality, we have
\begin{equation*}
   \Big (\sum^n_{k=1}\frac {a_k}{k^{r}}\Big )^q \leq \Big
   ( \sum^n_{k=1}\frac {k^{\gamma}a^q_k}{k^{rq}} \Big
   )\Big(\sum^n_{k=1}k^{\gamma/(1-q)}\Big )^{q-1},
   \hspace{0.1in} \gamma=1/p+r/(p-1).
\end{equation*}
    It follows that
\begin{eqnarray*}
  && \sum^{\infty}_{n=1}\Big ( \Big(\frac {(n+1)^{r}-n^{r}}{r}\Big )\sum^{n}_{k=1}\frac {a_k}{k^{r}} \Big
   )^q  \\
   &\leq &
   \sum^{\infty}_{k=1}\frac
   {k^{\gamma}a^q_k}{k^{rq}}\sum^{\infty}_{n=k} \Big( \frac {(n+1)^{r}-n^{r}}{r} \Big
   )^q\Big ( \sum^n_{i=1}i^{\gamma/(1-q)} \Big
   ) ^{q-1}.
\end{eqnarray*}
    We now note the following inequality (\cite[Lemma 2, p. 18]{L&S}), which asserts for $r
    \geq 1$, we have
\begin{equation}
\label{bound1}
   \sum^n_{i=1}i^r \geq \frac {r}{1+r}\frac
   {n^r(n+1)^r}{(n+1)^r-n^r}.
\end{equation}
    The above inequality reverses when $-1<r \leq 1$ (only the case $r \geq 0$ of the above inequality was proved in \cite{L&S}
     but one checks easily that the proof extends to the case $r
     >-1$).

    When $\gamma/(1-q) \geq 1$, which is equivalent to
    the condition $(r+2)p \leq 0$, we can apply
    estimation \eqref{bound1} to get
\begin{equation}
\label{2.6}
    \sum^n_{i=1}i^{\gamma/(1-q)} \geq \frac {1}{1+\gamma/(1-q)}\Big
    (\int^{n+1}_nx^{-\gamma/(1-q)-1}dx\Big )^{-1}
\end{equation}

    This combines with \eqref{2.6} implies that
\begin{eqnarray*}
  && \sum^{\infty}_{n=1}\Big ( \Big(\frac {(n+1)^{r}-n^{r}}{r}\Big )\sum^{n}_{k=1}\frac {a_k}{k^{r}} \Big
   )^q  \\
   &\leq & \Big ( \frac {1}{1+\gamma/(1-q)}\Big )^{q-1}
   \sum^{\infty}_{k=1}\frac
   {k^{\gamma}a^q_k}{k^{rq}}\sum^{\infty}_{n=k}\Big ( \int^{n+1}_nx^{r-1}dx\Big )^q\Big ( \int^{n+1}_nx^{-\gamma/(1-q)-1}dx \Big
   )^{1-q} \\
   & \leq & \Big ( \frac {1}{1+\gamma/(1-q)}\Big )^{q-1}
   \sum^{\infty}_{k=1}\frac
   {k^{\gamma}a^q_k}{k^{rq}}\sum^{\infty}_{n=k}\int^{n+1}_nx^{q(r-1)+(1-q)(-\gamma/(1-q)-1)}dx \\
   &=& \Big ( \frac {1}{1+\gamma/(1-q)}\Big )^{q}=\Big ( \frac { p}{1-r p}\Big
   )^{q}.
\end{eqnarray*}
    This completes the proof of Theorem \ref{thm2}.
\section{Proof of Theorem \ref{thm1}}
\label{sec 3} \setcounter{equation}{0}

    We shall give two proofs of the case $0<p<1$ here and as we mentioned earlier, the first proof can be
    viewed as an analogue to Bennett's proof (\cite[Theorem 1]{Be1}) of Theorem \ref{thm7} and the second proof is a
   generalization of the proof of Theorem 62 in \cite{L&S}. An inspection of the
    proofs shows that they also work for the cases $p \geq 1$ and $p<0$ as well (taken into account the reversed inequality of \eqref{bound1}). The
   first proof given below can also be viewed as a translation
   of the proof of Theorem \ref{thm2} given in the previous
   section via duality. From now on in this section, we assume
   $0<p<1$.

   The first proof:

   Our discussion in Section \ref{sec 2} suggests that if we take the approach there,
   then we should take an auxiliary sequence $(W_k)$ so that asymptotically, a similar expression would lead to something like the last expression of \eqref{2.5}.
   One can see in what follows that our selection of the auxiliary sequence in the proof is then guided by this.  By the reversed H\"older's inequality, we have
\begin{eqnarray}
\label{3.1}
 && \Big(\sum^{\infty}_{k=n}\Big(\frac {(k+1)^{r}-k^{r}}{r}\Big
)a_k \Big )^p  \\
& \geq &
\Big(\sum^{\infty}_{k=n}\big(k^{r-1/p}-(k+1)^{r-1/p}\big)\Big)^{p-1}
\cdot \Big(\sum^{\infty}_{k=n}\Big(\frac {(k+1)^{r}-k^{r}}{r}\Big
)^p\big(k^{r-1/p}-(k+1)^{r-1/p}\big)^{1-p}a^p_k
\Big) \nonumber \\
  &=&
  n^{(1-rp)(1-p)/p}\sum^{\infty}_{k=n}\Big(\frac {(k+1)^{r}-k^{r}}{r}\Big
)^p\big(k^{r-1/p}-(k+1)^{r-1/p}\big)^{1-p}a^p_k. \nonumber
\end{eqnarray}
   We then proceed as in Section \ref{sec 2} to see that
\begin{eqnarray*}
   && \sum^{\infty}_{n=1}\Big ( \frac {1}{n^{r}}\sum^{\infty}_{k=n}\Big(\frac {(k+1)^{r}-k^{r}}{r}\Big )a_k \Big
   )^p \\
   & \geq &  \sum^{\infty}_{k=1}a^p_k\Big(\frac {(k+1)^{r}-k^{r}}{r}\Big )^p\Big(k^{r-1/p}-(k+1)^{r-1/p}\Big)^{1-p}\sum^{k}_{n=1}n^{1/p-(1+r)}.
\end{eqnarray*}
   It therefore suffices to show that
\begin{equation}
\label{2.3}
   \Big(\frac {(k+1)^{r}-k^{r}}{r}\Big )^p\big(k^{r-1/p}-(k+1)^{r-1/p}\big)^{1-p}\sum^{k}_{n=1}n^{1/p-(1+r)}
   \geq \Big ( \frac {p}{1-rp}\Big
   )^p.
\end{equation}

    We now apply inequality \eqref{bound1} to see that in order for inequality
    \eqref{2.3} to hold, it suffices to show that for $n \geq 1$ (note that for $(2+r)p \leq 1, 1/p-(1+r) \geq 1$),
\begin{equation*}
   \Big(\frac {(n+1)^{r}-n^{r}}{r}\Big )^p\Big( \frac
{n^{r-1/p}-(n+1)^{r-1/p}}{(1-r p)/p} \Big)^{1-p} \geq
   \frac
   {n^{1+r-1/p}-(n+1)^{1+r-1/p}}{1/p-1-r}.
\end{equation*}

    We can recast the above inequality  as
\begin{equation}
\label{3.6}
   \Big(\int^{n+1}_nx^{r-1}dx \Big)^p\Big(\int^{n+1}_nx^{r-1/p-1}dx \Big)^{1-p} \geq
   \int^{n+1}_nx^{r-1/p}dx.
\end{equation}
   H\"older's inequality now implies the above inequality and
   this completes the first proof.

   The second proof:

   Similar to \eqref{2.2'}, we have
\begin{eqnarray*}
  && \sum^{\infty}_{n=1}\Big ( \frac {1}{n^{r}}\sum^{\infty}_{k=n}\Big( \frac {(k+1)^{r}-k^{r}}{r}\Big )a_k \Big
   )^p \\
   &\geq & \sum^{\infty}_{k=1}a^p_k\Big( \frac {(k+1)^{r}-k^{r}}{r}\Big )^pW_k\sum^k_{n=1}\frac {1}{n^{rp}}\Big (
\sum^{\infty}_{i=n}W^{1/(1-p)}_i \Big )^{p-1}.  \nonumber
\end{eqnarray*}

   We now choose $W_k$ to be
\begin{equation*}
    W_k=\Big( \frac {(k+1)^{r}-k^{r}}{r}\Big )^{-p}\Big ( \sum^k_{i=1}i^{\gamma} \Big )^{-1}, \hspace{0.1in}
\gamma=\frac {1-r p}{p}-1.
\end{equation*}
    Using \eqref{bound1} and \eqref{3.6}, we have
\begin{eqnarray*}
     W_k &\leq& (1+\gamma)\Big( \int^{k+1}_k x^{r-1}dx \Big
     )^{-p}\Big ( \int^{k+1}_k x^{-\gamma-1}dx \Big ) \\
     & \leq & (1+\gamma)\Big( \int^{k+1}_k x^{-\gamma-2}dx \Big
     )^{1-p}.
\end{eqnarray*}
     It follows that
\begin{eqnarray*}
   && \Big(\frac {(k+1)^{r}-k^{r}}{r}\Big )^pW_k\sum^k_{n=1}\frac {1}{n^{r p}}\Big (
\sum^{\infty}_{i=n}W^{1/(1-p)}_i \Big )^{p-1}  \\
 & \geq & (1+\gamma)^{-1}\Big ( \sum^k_{i=1}i^{\gamma} \Big
 )^{-1}\sum^k_{n=1}\frac {1}{n^{r p}}\Big (
\sum^{\infty}_{i=n}\int^{i+1}_i x^{-\gamma-2}dx \Big )^{p-1} \\
 &=& (1+\gamma)^{-p}=\Big ( \frac {p}{1-r p}\Big
   )^p.
\end{eqnarray*}
   This now completes the second proof of Theorem \ref{thm1}.

\section{Proofs of Theorem \ref{thm5} and Theorem \ref{thm6}}
\label{sec 4} \setcounter{equation}{0}

    We first give the proof of Theorem \ref{thm5} and we need a
    lemma:
\begin{lemma}
\label{lem1}
    Let $0<p<1, 0<r \leq p$, $\beta \geq 1+2r/(1-p)$. The function
\begin{equation*}
    f_{p,r,\beta}(x)=x^{-1}\Big((1+x)^{\beta-r/(1-p)}-(1+x)^{-r/(1-p)}
    \Big )
\end{equation*}
    is an increasing function of $0 \leq x \leq 1$.
\end{lemma}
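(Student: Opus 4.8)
The plan is to show $f_{p,r,\beta}'(x)\ge 0$ on $[0,1]$ by writing everything over the common positive factor $x^2$ and reducing to a single-variable inequality. Set for brevity $s=r/(1-p)>0$, so $\beta-s\ge 1+s>0$ by hypothesis, and
\[
    x^2 f_{p,r,\beta}'(x)=x\Big((\beta-s)(1+x)^{\beta-s-1}+s(1+x)^{-s-1}\Big)-\Big((1+x)^{\beta-s}-(1+x)^{-s}\Big).
\]
Thus it suffices to prove $g(x):=x\big((\beta-s)(1+x)^{\beta-s-1}+s(1+x)^{-s-1}\big)-(1+x)^{\beta-s}+(1+x)^{-s}\ge 0$ for $0\le x\le 1$. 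Since $g(0)=0$, the natural route is to show $g$ is nondecreasing, i.e. $g'(x)\ge 0$; differentiating, the terms without the factor $x$ cancel, leaving
\[
    g'(x)=x\Big((\beta-s)(\beta-s-1)(1+x)^{\beta-s-2}-s(s+1)(1+x)^{-s-2}\Big).
\]
So $g'(x)\ge 0$ on $[0,1]$ is equivalent (dividing by $x>0$ and by $(1+x)^{-s-2}>0$) to
\[
    (\beta-s)(\beta-s-1)(1+x)^{\beta}\ \ge\ s(s+1),
\]
and since $(1+x)^\beta\ge 1$, it is enough that $(\beta-s)(\beta-s-1)\ge s(s+1)$. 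Writing $t=\beta-s$, the hypothesis $\beta\ge 1+2s$ gives $t\ge 1+s$, hence $t(t-1)\ge (1+s)s=s(s+1)$, which is exactly what we need. Therefore $g'\ge 0$ on $[0,1]$, so $g\ge g(0)=0$, so $f_{p,r,\beta}'\ge 0$, and $f_{p,r,\beta}$ is increasing on $[0,1]$.

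I expect the only genuinely delicate point to be bookkeeping the two differentiations and confirming that the "free" (non-$x$-multiplied) terms cancel at each stage — this is what collapses the problem to the clean inequality $(\beta-s)(\beta-s-1)\ge s(s+1)$; once that cancellation is checked, the monotonicity hypothesis $\beta\ge 1+2r/(1-p)$ is precisely the condition that makes that quadratic inequality hold, so no further case analysis is needed. One should also note $x=0$ is a removable singularity of $f_{p,r,\beta}$ (the bracket vanishes there), so "increasing on $0\le x\le 1$" is to be read for the continuous extension, with $f_{p,r,\beta}(0)=\beta$; this causes no trouble since all the estimates above are carried out for $x>0$ and pass to the limit.
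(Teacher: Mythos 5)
Your proof is correct and takes essentially the same route as the paper: there one writes $f'_{p,r,\beta}(x)=x^{-2}(1+x)^{-r/(1-p)-1}h_{p,r,\beta}(x)$, where $h_{p,r,\beta}=(1+x)^{r/(1-p)+1}g$ in your notation, notes $h_{p,r,\beta}(0)=0$, and shows $h'_{p,r,\beta}\ge 0$ using the hypothesis on $\beta$. The only cosmetic difference is the final step: the paper invokes the mean value theorem bound $(1+x)^{\beta}-1\le \beta x(1+x)^{\beta-1}$, while you differentiate once more and reduce to the quadratic inequality $(\beta-s)(\beta-s-1)\ge s(s+1)$ with $s=r/(1-p)$ --- both being precisely the condition $\beta\ge 1+2r/(1-p)$.
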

\begin{proof}
   We have
   $f'_{p,r,\beta}(x)=x^{-2}(1+x)^{-r/(1-p)-1}h_{p,r,\beta}(x)$,
   where
\begin{equation*}
   h_{p,r,\beta}(x)=1+x-(1+x)^{\beta+1}+\frac {x}{1-p}\Big ((\beta(1-p)-r)(1+x)^{\beta}+r \Big
   ).
\end{equation*}
   We also have
\begin{equation*}
   \frac {1-p}{1-p+r}h'_{p,r,\beta}(x)=\beta \Big ( \frac {\beta(1-p)-r}{1-p+r}
   \Big)x(1+x)^{\beta-1}-\Big ( (1+x)^{\beta}-1 \Big ) \geq 0,
\end{equation*}
   where the last inequality above follows from the mean value
   theorem and our assumption on $\beta$. As $h_{p,r,\beta}(0)=0$, it follows that $h_{p,r,\beta}(x) \geq 0$ for $0
   \leq x \leq 1$. We then deduce from this that $f_{p,r,\beta}(x)$ is an
   increasing function of $0 \leq x \leq 1$ and this completes the
   proof.
\end{proof}

    We now return to the proof of Theorem \ref{thm5} and by a change of variables, $a_n \rightarrow n^{(r-p)/p}a_n$, we can recast
    inequality \eqref{4.1} as
\begin{equation}
\label{4.0}
  \sum^{\infty}_{n=1}\frac 1{n^r} \Big( \sum^{\infty}_{k=n}k^{(r-p)/p}a_k \Big
  )^p \geq c_{p,r} \sum^{\infty}_{n=1}a^p_n.
\end{equation}
    We follow the process in the first proof of Theorem \ref{thm1} in Section \ref{sec 3}, but
    this time, instead of using
    $k^{r-1/p}-(k+1)^{r-1/p}$ in \eqref{3.1}, we
    use $k^{-\beta}-(k+1)^{-\beta}$, where $\beta>0$ is a constant to be chosen later. The
    same process then leads to inequality \eqref{4.0} with the
    constant $c_{p,r}$ given by
\begin{equation}
\label{4.02}
  \min_{k \geq 1} \Big (  \big(k^{-\beta}-(k+1)^{-\beta}\big)^{1-p}k^{r-p}\sum^k_{n=1}n^{\beta(1-p)-r} \Big ).
\end{equation}
    We note the following
    inequality (\cite[Lemma 1, p. 18]{L&S}), which asserts for $0 \leq r
    \leq 1$, we have
\begin{equation}
\label{4.2}
   \sum^n_{i=1}i^r \geq \frac {n(n+1)^r}{1+r}.
\end{equation}

    We now assume $r/(1-p) \leq \beta \leq (1+r)/(1-p)$ so that $0
    \leq \beta(1-p)-r \leq 1$ and we can use the bound \eqref{4.2}
    in \eqref{4.02} to see that
\begin{equation*}
    c_{p,r} \geq \min_{k \geq 1}\frac
    {f^{1-p}_{p,r,\beta}(1/k)}{1+\beta(1-p)-r},
\end{equation*}
    where $f_{p,r,\beta}(x)$ is defined as in Lemma \ref{lem1}. We now take $\beta=1+2r/(1-p)$ and it is easy to verify that
    the so chosen $\beta$ satisfies $r/(1-p) \leq \beta \leq (1+r)/(1-p)$.
    It then follows from Lemma \ref{lem1} that $\min_{k \geq
   1}f^{1-p}(1/k)=\lim_{x \rightarrow
   0^+}f^{1-p}(x)=\beta^{1-p}=(1+2r/(1-p))^{1-p}$. This now leads to the
   constant $c_{p,r}$ given in the statement of Theorem \ref{thm5}
   and this completes the proof.


   We now give the proof of Theorem \ref{thm6}. Again we follow the process in the first proof of Theorem \ref{thm1} in Section \ref{sec 3} and similar to
   our proof of Theorem \ref{thm5} above, instead of using
    $k^{r-1/p}-(k+1)^{r-1/p}$ in \eqref{3.1}, we
    use $k^{-\beta}-(k+1)^{-\beta}$, where $\beta>0$ is a constant to be determined. The
    same process then leads to inequality \eqref{1} with
    $c_{1/2}=\min_{k \geq 1}s_k$, where
\begin{equation*}
    s_k= \big(k^{-\beta}-(k+1)^{-\beta}\big)^{1/2}\sum^k_{n=1}n^{(\beta-1)/2} .
\end{equation*}
    We want to choose $\beta$ properly to maximize $c_{1/2}$.  On considering $s_1$ and
    $\lim_{k \rightarrow +\infty}s_k$, we see that
\begin{equation*}
    c_{1/2} \geq \min \Big( (1-2^{-\beta})^{1/2}, \frac
    {2\beta^{1/2}}{1+\beta} \Big).
\end{equation*}
    Note that when $\beta=2$, $(1-2^{-2})^{1/2}=\sqrt{3}/2$ and
    when $\beta=3$, $2*3^{1/2}/(1+3)=\sqrt{3}/2$. As
    $(1-2^{-\beta})^{1/2}$ is an increasing function of $\beta$
    while $2\beta^{1/2}/(1+\beta)$ is a decreasing function of
    $\beta \geq 1$, our calculations above show that it suffices to
    consider $2 \leq \beta \leq 3$. On setting
    $(1-2^{-\beta})^{1/2}=2\beta^{1/2}/(1+\beta)$, we find that
    the optimal $\beta$ is approximately $2.4739$ and the value of
    $(1-2^{-\beta})^{1/2}$ or $2\beta^{1/2}/(1+\beta)$ at this
    number is approximately $0.9055$. This suggests that in order
    to maximize the value of $c_{1/2}$, we need to take $\beta$ to
    be around $2.47$. We now take $\beta=2.4$ instead and use the bound \eqref{4.2} to see that
\begin{equation*}
    \big(k^{-\beta}-(k+1)^{-\beta}\big)^{1/2}\sum^k_{n=1}n^{(\beta-1)/2}
    \geq \frac {2}{1+\beta}u^{1/2}_{\beta}(1/k),
\end{equation*}
    where
\begin{equation*}
    u_{\beta}(x)=x^{-1}\Big((1+x)^{\beta-1}-(1+x)^{-1} \Big ).
\end{equation*}
    We have $u'_{\beta}(x)=x^{-2}(1+x)^{-2}v_{\beta}(x)$, where
\begin{equation*}
    v_{\beta}(x)=1+x-(1+x)^{1+\beta}+x(1+(\beta-1)(1+x)^{\beta}).
\end{equation*}
    It's easy to check that $v'_{\beta}(0)=0$ and that
\begin{equation*}
     v''_{\beta}(x)=\beta(1+x)^{\beta-2}(\beta-3+(\beta^2-\beta-2)x).
\end{equation*}
    It's also easy to see that the last factor of the right-hand
    expression above is $<0$ when $\beta=2.4$ and $0 \leq x \leq
    1/3$. It follows that $v'_{\beta}(x) \leq 0$ when $\beta=2.4$
    and $0 \leq x \leq 1/3$. As $v_{\beta}(0)=0$, we deduce that
    $v_{\beta}(x) \leq 0$ when $\beta=2.4$
    and $0 \leq x \leq 1/3$. This means that when $\beta=2.4$,
    $u_{\beta}(x)$ is a decreasing function for $0 \leq x \leq
    1/3$.

    Our discussions above combined with direct calculations now imply that
\begin{equation*}
    c_{1/2} \geq \min \Big (\frac {2}{1+2.4}u^{1/2}_{2.4}(1/11) \approx 0.9001, \min_{1 \leq k \leq 10}s_k \Big ) \geq 0.9.
\end{equation*}
   This completes the proof of Theorem \ref{thm6}.

\section{Another proof of Theorem \ref{thm7}}
\label{sec 5} \setcounter{equation}{0}

    By H\"older's inequality, we have
\begin{equation*}
  \Big(\sum^{n}_{k=1}\big(k^{\alpha}-(k-1)^{\alpha}\big
)a_k \Big )^p \leq
\Big(\sum^{n}_{k=1}W^{1/(1-p)}_k\Big)^{p-1}\Big(\sum^{n}_{k=1}W_k\big(k^{\alpha}-(k-1)^{\alpha}
\big )^pa^p_k \Big),
\end{equation*}
   where $(W_k)$ is a sequence to be determined later. It follows
   that
\begin{eqnarray*}
  && \sum^{\infty}_{n=1}\Big ( \frac {1}{n^{\alpha}}\sum^{n}_{k=1}\big(k^{\alpha}-(k-1)^{\alpha}\big )a_k \Big
   )^p \\
   &\leq & \sum^{\infty}_{k=1}a^p_k\big(k^{\alpha}-(k-1)^{\alpha}\big
)^pW_k\sum^{\infty}_{n=k}\frac {1}{n^{\alpha p}}\Big (
\sum^{n}_{i=1}W^{1/(1-p)}_i \Big )^{p-1}.  \nonumber
\end{eqnarray*}

   We now choose $W_k$ to be
\begin{equation*}
    W_k=\big(k^{\alpha}-(k-1)^{\alpha} \big
)^{-p}\Big ( \sum^{\infty}_{i=k}i^{\gamma} \Big )^{-1},
\hspace{0.1in} \gamma=-\frac {\alpha p-1}{p}-1.
\end{equation*}

     We now note the following inequality (\cite[(30)]{Be1}) for $k \geq 1, \gamma>1$:
\begin{equation}
\label{5.01}
     (k^{\gamma}-(k-1)^{\gamma})\sum^{\infty}_{n=k}\frac
     {1}{n^{\gamma}} \leq \frac {\gamma}{\gamma-1}.
\end{equation}
     The above estimation implies that
\begin{equation*}
   W_k \geq \alpha^{-p}\Big (\frac {p}{\alpha p-1} \Big )^{-1}\Big
   (\frac {k^{\alpha}-(k-1)^{\alpha}}{\alpha} \Big )^{-p}\int^k_{k-1}x^{\alpha-1/p}dx.
\end{equation*}

     It follows that
\begin{eqnarray*}
     W_k &\geq& \alpha^{-p}\Big (\frac {p}{\alpha p-1} \Big )^{-1}
     \Big( \int^k_{k-1}x^{\alpha-1}dx \Big )^{-p}\int^k_{k-1}x^{\alpha-1/p}dx \\
     &\geq & \alpha^{-p}\Big (\frac {p}{\alpha p-1} \Big )^{-1}
     \Big( \int^k_{k-1}x^{\alpha-1/p-1}dx \Big )^{1-p},
\end{eqnarray*}
     where the last inequality above follows from H\"older's
     inequality. We then have
\begin{equation*}
    \Big (
\sum^{n}_{i=1}W^{1/(1-p)}_i \Big )^{p-1} \leq \alpha^{p}\Big
(\frac {p}{\alpha p-1} \Big )\Big (
\sum^{n}_{i=1}\int^{i}_{i-1}x^{\alpha-1/p-1}dx
\Big)^{p-1}=\alpha^{p}\Big (\frac {p}{\alpha p-1} \Big
)^pn^{(\alpha-1/p)(p-1)}.
\end{equation*}

     We then deduce that
\begin{eqnarray*}
   && \big(k^{\alpha}-(k-1)^{\alpha}\big
)^pW_k\sum^{\infty}_{n=k}\frac {1}{n^{\alpha p}}\Big (
\sum^{n}_{i=1}W^{1/(1-p)}_i \Big )^{p-1} \\
 & \leq & \alpha^{p}\Big (\frac {p}{\alpha p-1} \Big )^p\Big (\sum^{\infty}_{n=k}n^{-1-\alpha+1/p} \Big )^{-1}
 \sum^{\infty}_{n=k}\frac {n^{(\alpha-1/p)(p-1)}}{n^{\alpha p}}=\Big (\frac {\alpha p}{\alpha p-1} \Big )^p.
\end{eqnarray*}
   This now completes the proof of Theorem \ref{thm7}.

\section{Weighted Remainder Form of Hardy-type Inequalities }
\label{sec 6} \setcounter{equation}{0}
   In this section we study the weighted remainder form of Hardy-type inequalities in general. Let $(\lambda_n)$ be a positive sequence satisfying
   $\sum^{\infty}_{n=1}\lambda_n<+\infty$. We set in this section
$\Lambda_n=\sum^{\infty}_{k=n}\lambda_k$ and consider inequality
\eqref{6.1}. As we mentioned earlier, our goal is to find
conditions on the $\lambda_n$'s
   so that inequality \eqref{6.1} (or its reverse) can hold under
   these conditions. Our approaches in this section follow closely
   the approaches used in \cite{G1}, \cite{G6} and \cite{G7}.
   We first let $N$ be a large integer and for $1 \leq n \leq N$, we set $S_n=\sum^{N}_{k=n}\lambda_ka_k$ and
\begin{equation}
\label{6.3}
     A_n=\frac {\sum^{N}_{k=n}\lambda_ka_k}{\Lambda_n}.
\end{equation}

     It follows from \cite[(2.6)]{G} that we have for $0 < p<1,
   1/p+1/q=1$, $1 \leq k \leq N$,
\begin{equation}
\label{6.3'}
    \mu_kS^{1/p}_k-(\mu_k^q-\eta^q_k)^{1/q}S^{1/p}_{k+1} \leq \eta_k
    \lambda^{1/p}_ka^{1/p}_{k},
\end{equation}
    where $\mu^q_k > \eta^q_k \geq 0$ and the above inequality reverses when $p>1$. Here we define $S_{N+1}=0$.
    Due to similarities, we shall suppose $0 < p<1$ here and
    summing the above inequality
    from $k=1$ to $N$ leads to
\begin{equation*}
   \mu_1S^{1/p}_1+\sum^N_{k=2}\Big(\mu_k-(\mu_{k-1}^q-\eta^q_{k-1})^{1/q}
   \Big )S^{1/p}_k \leq
   \sum^N_{n=1}\eta_n
    \lambda^{1/p}_na^{1/p}_{n}.
\end{equation*}
    We now set $\eta_i=\lambda^{-1/p}_i$ together with a change of
    variables $\mu_i \rightarrow \mu_i\eta_i$ to recast the above
    inequality as
\begin{equation*}
   \frac {\mu_1S^{1/p}_1}{\lambda^{1/p}_1}+\sum^N_{k=2}\Big(\frac {\mu_k}{\lambda^{1/p}_k}-\frac
   {(\mu_{k-1}^q-1)^{1/q}}{\lambda^{1/p}_{k-1}}
   \Big )S^{1/p}_k \leq
   \sum^N_{n=1}a^{1/p}_{n}.
\end{equation*}
    We further set $\mu^q_i-1=\nu_i$ and make a further change of variables: $p \rightarrow 1/p$ to recast the above
    inequality as
\begin{equation*}
   \frac {(1+\nu_1)^{1-p}S^{p}_1}{\lambda^{p}_1}+\sum^N_{k=2}\Big(\frac {(1+\nu_k)^{1-p}}{\lambda^{p}_k}-\frac
   {\nu^{1-p}_{k-1}}{\lambda^{p}_{k-1}}
   \Big )S^{p}_k  \leq
   \sum^N_{n=1}a^{p}_{n}.
\end{equation*}
     We now set $\nu_i=\sum^{\infty}_{n={i+1}}w_i/w_i$, where $w_n$'s are positive parameters, to recast
     the above inequality as
\begin{equation*}
    \frac {w^{p-1}_1}{\lambda^{p}_1}\Big (\sum^{\infty}_{i=1}w_i\Big )^{1-p}\Lambda^p_1A^p_1+\sum_{n=2}^{N}
    \Big(\sum^{\infty}_{k=n}w_k \Big )^{-(p-1)}\Big( \frac {w_n^{p-1}}{\lambda^p_n}-\frac {w_{n-1}^{p-1}}{\lambda^p_{n-1}} \Big )
    \Lambda^p_n A_n^{p} \leq \sum_{n=1}^N
    a_n^{p}.
\end{equation*}

   By a change of variables $w_n \rightarrow \lambda_nw^{1/(p-1)}_n$,
   we can recast the above inequality as
\begin{equation*}
    \frac {w_1}{\lambda_1}\Big (\sum^{\infty}_{i=1}\frac {\lambda_iw^{1/(p-1)}_i}{\Lambda_1}\Big )^{1-p}\Lambda_1A^p_1+\sum_{n=2}^{N}
    \Big( \frac {\sum^{\infty}_{k=n}\lambda_kw^{1/(p-1)}_k}{\Lambda_n} \Big )^{-(p-1)}\Big( \frac {w_n}{\lambda_n}-\frac {w_{n-1}}{\lambda_{n-1}} \Big )
    \Lambda_n A_n^{p} \leq \sum_{n=1}^N  a_n^{p}.
\end{equation*}
   With another change of variables, $w_n/w_{n-1} \rightarrow b_n$ with $w_0=1$,
   we can further recast the above inequality as
\begin{eqnarray}
\label{5.4}
   &&\frac {b_1}{\lambda_1}\Big( \frac {\sum^{\infty}_{k=1}\lambda_k\prod^{k}_{i=1}b^{1/(p-1)}_i}{\Lambda_n} \Big )^{-(p-1)}\Lambda_1A^p_1 \\
   &&+ \sum_{n=2}^{N}
    \Big( \frac {\sum^{\infty}_{k=n}\lambda_k\prod^{k}_{i=n}b^{1/(p-1)}_i}{\Lambda_n} \Big )^{-(p-1)}\Big( \frac {b_n}{\lambda_n}-\frac {1}{\lambda_{n-1}} \Big )
    \Lambda_n A_n^{p}  \leq \sum_{n=1}^N  a_n^{p}.  \nonumber
\end{eqnarray}
    We now choose the $b_n$'s to satisfy:
\begin{equation*}
     \sum^{\infty}_{k=n}\lambda_k\prod^{k}_{i=n}b^{1/(p-1)}_i=
     \frac {p}{p-L}\Lambda_n.
\end{equation*}
     From this we solve the $b_n$'s to get
\begin{equation*}
     b_n=\Big (1-\frac {L}{p}\frac {\lambda_n}{\Lambda_n} \Big )^{1-p}.
\end{equation*}

    Upon requiring $\Lambda_n(b_n/\lambda_n-1/\lambda_{n-1}) \geq 1-L/p$ (with $1/\lambda_0=0$) and letting $N \rightarrow +\infty$,
    we deduce easily from \eqref{5.4} the $p>1$ (and $0<p<1$)
    cases of
    the following
\begin{theorem}
\label{thm3.1}
  Let $p \neq 0$ be fixed and $a_n > 0$. Let $L$ be a number satisfying $L<p$ when $p>0$ and $L>p$ when
  $p<0$. Suppose that $\lim_{n \rightarrow
  \infty}\Lambda_{n+1}(\sum^{\infty}_{k=n+1}\lambda_ka_k/\Lambda_{n+1})^p/\lambda_{n}=0$ when $p<0$.
  When $p > 1$ or $p<0$, if (with
  $\Lambda_0/\lambda_0=1$) for $n \geq 1$,
\begin{equation}
\label{6.6'}
    \frac {\Lambda_{n-1}}{\lambda_{n-1}} \leq \frac
    {\Lambda_n}{\lambda_n}  \Big (1- \frac
    {L\lambda_n}{p\Lambda_n} \Big )^{1-p}+\frac {L}{p},
\end{equation}
    then inequality \eqref{6.1} holds when $p>1$ or $p<0$. If
    the reversed inequality above holds when $0<p<1$, then the
    reversed inequality of \eqref{6.1} also holds.
\end{theorem}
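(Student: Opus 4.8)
The plan is to substitute into \eqref{5.4} the sequence $(b_n)$ already constructed, for which $\sum_{k=n}^{\infty}\lambda_k\prod_{i=n}^{k}b_i^{1/(p-1)}=\frac{p}{p-L}\Lambda_n$ and $b_n=\bigl(1-\tfrac{L\lambda_n}{p\Lambda_n}\bigr)^{1-p}$. With this choice every factor $\bigl(\Lambda_n^{-1}\sum_{k=n}^{\infty}\lambda_k\prod_{i=n}^{k}b_i^{1/(p-1)}\bigr)^{-(p-1)}$ occurring in \eqref{5.4} equals the constant $\bigl(\tfrac{p-L}{p}\bigr)^{p-1}$, and, folding the first term of \eqref{5.4} into the sum via the convention $1/\lambda_0=0$, \eqref{5.4} becomes
\begin{equation*}
   \Bigl(\frac{p-L}{p}\Bigr)^{p-1}\sum_{n=1}^{N}\Bigl(\frac{b_n}{\lambda_n}-\frac{1}{\lambda_{n-1}}\Bigr)\Lambda_n A_n^{p}\ \le\ \sum_{n=1}^{N}a_n^{p}\qquad (p>1),
\end{equation*}
with $A_n$ as in \eqref{6.3}; the inequality is reversed for $0<p<1$, and the same inequality holds for $p<0$ once the boundary term discussed at the end is dealt with.

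Next I would observe that hypothesis \eqref{6.6'} is merely a sign statement about the coefficients in the displayed inequality: using $\Lambda_{n-1}=\lambda_{n-1}+\Lambda_n$, \eqref{6.6'} is equivalent to
\begin{equation*}
   \Bigl(\frac{b_n}{\lambda_n}-\frac{1}{\lambda_{n-1}}\Bigr)\Lambda_n\ \ge\ 1-\frac{L}{p}\qquad (n\ge 1,\ 1/\lambda_0=0),
\end{equation*}
and the reversed \eqref{6.6'} to the reverse of this. One checks directly that $1-L/p>0$ in each of the three regimes ($p>1$; $0<p<1$; and $p<0$, where $L>p$ together with $p<0$ forces $L/p<1$), and that $\bigl(\tfrac{p-L}{p}\bigr)^{p-1}>0$ since its base is positive in each regime. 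Hence, for $p>1$ (and likewise $p<0$), every coefficient $\bigl(\tfrac{p-L}{p}\bigr)^{p-1}\bigl(\tfrac{b_n}{\lambda_n}-\tfrac{1}{\lambda_{n-1}}\bigr)\Lambda_n$ is at least $\bigl(\tfrac{p-L}{p}\bigr)^{p-1}(1-\tfrac Lp)=\bigl(\tfrac{p-L}{p}\bigr)^{p}$; since $A_n^{p}\ge 0$, one may drop every coefficient to this common lower bound and conclude $\sum_{n=1}^{N}A_n^{p}\le\bigl(\tfrac{p}{p-L}\bigr)^{p}\sum_{n=1}^{N}a_n^{p}$. When $0<p<1$ both inequalities reverse while the two sign facts persist, so the same argument gives $\sum_{n=1}^{N}A_n^{p}\ge\bigl(\tfrac{p}{p-L}\bigr)^{p}\sum_{n=1}^{N}a_n^{p}$.

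It then remains to let $N\to\infty$. Since $\sum\lambda_k<\infty$, each $\Lambda_n$ is finite and $\sum_{k=n}^{N}\lambda_k a_k\uparrow\sum_{k=n}^{\infty}\lambda_k a_k$, so $A_n\uparrow\Lambda_n^{-1}\sum_{k=n}^{\infty}\lambda_k a_k$ for every $n$; we may assume $\sum_n a_n^{p}<\infty$, else the assertion is vacuous. For $p>0$ the map $x\mapsto x^{p}$ is increasing, so Fatou's lemma (applied to the counting measure on the positive integers) carries the finite-$N$ bound to the limit and yields \eqref{6.1}, respectively its reverse for $0<p<1$. For $p<0$ one instead fixes $N_0$, keeps only the terms with $n\le N_0$ on the left of the finite-$N$ inequality (the discarded terms being positive), lets $N\to\infty$ using continuity of $x\mapsto x^{p}$ on $(0,\infty)$, and finally lets $N_0\to\infty$, again obtaining \eqref{6.1}.

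The one delicate point — and the reason for the extra hypothesis in the $p<0$ case — is the justification of the displayed inequality itself when $p<0$. There one cannot put $S_{N+1}=0$ in \eqref{6.3'}, since then $S_{N+1}^{1/p}$ would be infinite, so the derivation of \eqref{5.4} must be run with the genuine tails $S_n=\sum_{k=n}^{\infty}\lambda_k a_k$, and summing the $p<0$ counterpart of \eqref{6.3'} from \cite{G} over $1\le k\le N$ leaves a residual term at $k=N$. Tracing that term through the subsequent substitutions, it equals a positive multiple of $\Lambda_{N+1}\bigl(\Lambda_{N+1}^{-1}\sum_{k=N+1}^{\infty}\lambda_k a_k\bigr)^{p}/\lambda_N$, which is precisely the quantity assumed to tend to $0$; discarding it in the sense that weakens the inequality recovers \eqref{5.4} for $p<0$, after which the preceding argument applies unchanged. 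I expect controlling this boundary term, rather than any of the algebra, to be the only step requiring genuine care.
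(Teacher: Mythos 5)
Your proposal is correct and follows essentially the same route as the paper: it plugs the constructed $b_n$ into \eqref{5.4}, recognizes that \eqref{6.6'} is exactly the coefficient bound $\Lambda_n(b_n/\lambda_n-1/\lambda_{n-1})\geq 1-L/p$ (with $1/\lambda_0=0$), and lets $N\to\infty$, which is precisely how the paper deduces the theorem from its preceding derivation. Your treatment of the $p<0$ case — running the argument with the genuine tails and using the hypothesis $\lim_{n\to\infty}\Lambda_{n+1}A^p_{n+1}/\lambda_n=0$ to kill the residual boundary term — matches the paper's remark following the theorem, and your tracing of that term to a constant multiple of $\Lambda_{N+1}A^p_{N+1}/\lambda_N$ is accurate.
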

   The case $p<0$ of Theorem \ref{thm3.1}
   follows from the same arguments above staring from inequality
   \eqref{6.3'}, as it still holds for $p<0$, except this time we substitute $S_n$ by
   $\sum^{\infty}_{k=n}\lambda_ka_k$ and $A_n$ by
   $\sum^{\infty}_{k=n}\lambda_ka_k/\Lambda_n$. In this case, we may assume $\sum^{\infty}_{k=n}\lambda_ka_k/\Lambda_n<+\infty$,
   for otherwise, inequality
  \eqref{6.1} holds automatically.

    On taking Taylor expansion of the right-hand side expression of \eqref{6.6'}, we deduce easily from Theorem \ref{thm3.1} the following
\begin{cor}
\label{cor3.1}
  Let $p \neq 0$ be fixed and $a_n > 0$. Let $L$ be a number satisfying $L<p$ when $p>0$ and $L>p$ when
  $p<0$. Suppose that $\lim_{n \rightarrow
  \infty}\Lambda_{n+1}(\sum^{\infty}_{k=n+1}\lambda_ka_k/\Lambda_{n+1})^p/\lambda_{n}=0$ when $p<0$.
  When $p > 1$ or $p<0$, if (with
  $\Lambda_0/\lambda_0=1$) for $n \geq 1$,
\begin{equation}
\label{6.6}
    L \geq  \frac {\Lambda_{n-1}}{\lambda_{n-1}}-\frac {\Lambda_{n}}{\lambda_{n}},
\end{equation}
    then inequality \eqref{6.1} holds when $p>1$ or $p<0$. If
    the reversed inequality above holds when $0<p<1$, then the
    reversed inequality of \eqref{6.1} also holds.
\end{cor}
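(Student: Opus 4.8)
The plan is to show that hypothesis \eqref{6.6} implies hypothesis \eqref{6.6'}, after which the conclusion is immediate from Theorem \ref{thm3.1}; the reversed statement for $0<p<1$ will be handled in exactly the same way with every inequality reversed. The only tool needed is Bernoulli's inequality, in the form $(1+t)^{\alpha}\geq 1+\alpha t$ when $\alpha\leq 0$ or $\alpha\geq 1$, and $(1+t)^{\alpha}\leq 1+\alpha t$ when $0\leq\alpha\leq 1$, both valid for $t\geq -1$, applied with $t=-L\lambda_n/(p\Lambda_n)$ and $\alpha=1-p$.

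First I would record that in every relevant range the base $1-L\lambda_n/(p\Lambda_n)$ on the right of \eqref{6.6'} is positive, so that the power makes sense and Bernoulli applies: since $L<p$ when $p>0$ and $L>p$ when $p<0$, in all cases $L/p<1$, and because $0<\lambda_n/\Lambda_n\leq 1$ one has $1-L\lambda_n/(p\Lambda_n)\geq 1>0$ when $L/p\leq 0$ and $1-L\lambda_n/(p\Lambda_n)\geq 1-L/p>0$ when $0<L/p<1$.

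Next, for $p>1$ we have $1-p<0$ and for $p<0$ we have $1-p>1$, so in both cases Bernoulli gives
\[
\Big(1-\frac{L\lambda_n}{p\Lambda_n}\Big)^{1-p}\;\geq\;1+(1-p)\Big(-\frac{L\lambda_n}{p\Lambda_n}\Big)\;=\;1+\frac{(p-1)L\lambda_n}{p\Lambda_n}.
\]
Multiplying by $\Lambda_n/\lambda_n>0$ and adding $L/p$, the right-hand side of \eqref{6.6'} is bounded below by $\Lambda_n/\lambda_n+(p-1)L/p+L/p=\Lambda_n/\lambda_n+L$. Hence \eqref{6.6}, which (with $\Lambda_0/\lambda_0=1$) says precisely $\Lambda_{n-1}/\lambda_{n-1}\leq \Lambda_n/\lambda_n+L$, implies \eqref{6.6'}, and Theorem \ref{thm3.1} then yields \eqref{6.1} for $p>1$ and $p<0$; when $p<0$ the limit condition assumed here is literally the one already present in Theorem \ref{thm3.1}, so nothing extra is required. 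For $0<p<1$ we have $0<1-p<1$, so Bernoulli reverses and the identical computation gives $(1-L\lambda_n/(p\Lambda_n))^{1-p}\,\Lambda_n/\lambda_n+L/p\leq \Lambda_n/\lambda_n+L$; thus the reversed \eqref{6.6} forces the reversed \eqref{6.6'}, and the reversed \eqref{6.1} follows from Theorem \ref{thm3.1}.

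I do not expect a genuine obstacle here, since the entire content is the elementary estimate above. The only points needing a little care are (i) confirming that $t\geq -1$ so that both Bernoulli's inequality and the expression $(1-L\lambda_n/(p\Lambda_n))^{1-p}$ are legitimate, and (ii) keeping straight which direction Bernoulli's inequality runs in the three regimes $p>1$, $p<0$, and $0<p<1$.
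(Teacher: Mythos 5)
Your proof is correct and takes essentially the same route as the paper: the paper deduces Corollary \ref{cor3.1} from Theorem \ref{thm3.1} by a first-order Taylor expansion of the right-hand side of \eqref{6.6'}, which is precisely the Bernoulli estimate $(1-L\lambda_n/(p\Lambda_n))^{1-p}\gtrless 1-(1-p)L\lambda_n/(p\Lambda_n)$ you apply, with the same case split according to whether $1-p$ lies outside or inside $[0,1]$. Your added checks that the base is positive and $t>-1$ are fine and only make explicit what the paper leaves implicit.
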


    We now give an improvement of the above result:
\begin{theorem}
\label{thm3.2}
  Let $p \neq 0$ be fixed and $a_n > 0$. Let $L$ be a number satisfying $L<p$ when $p>0$ and $L>p$ when $p<0$. Suppose that $\lim_{n \rightarrow
  \infty}\Lambda_{n+1}(\sum^{\infty}_{k=n+1}\lambda_ka_k/\Lambda_{n+1})^p/\lambda_{n}=0$ when $p<0$.
  When $p \geq 1$ or $p<0$, if (with
  $\Lambda_0/\lambda_0=1$) for $n \geq 1$, inequality \eqref{6.6}
  holds,
    then for $p \geq 1$,
\begin{equation*}
   \sum^{\infty}_{n=1}\Big (\sum^{\infty}_{k=n}\frac
   {\lambda_ka_k}{\Lambda_n}\Big )^p \leq \frac {p}{p-L}
    \sum^{\infty}_{n=1}a_n \Big ( \sum^{\infty}_{k=n}\frac
   {\lambda_ka_k}{\Lambda_n}  \Big )^{p-1}.
\end{equation*}
   The above inequality reverses when $p<0$. When $0<p \leq 1$, the reversed
   inequality above also holds if the reversed inequality
   \eqref{6.6} holds for all $n \geq 1$.
\end{theorem}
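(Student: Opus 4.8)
The plan is to argue directly from the basic recursion for the weighted averages, in the same spirit as the proof of Theorem~\ref{thm3.1} but with the summation-by-parts stopped one step earlier so that the remainder sums survive on the right-hand side. For a large fixed $N$ write $A_n=\Lambda_n^{-1}\sum_{k=n}^N\lambda_ka_k$ (with $A_{N+1}=0$); in the case $p<0$ one uses instead the genuine tails $A_n=\Lambda_n^{-1}\sum_{k=n}^\infty\lambda_ka_k$, and the limit hypothesis in the statement is exactly what makes the boundary term of the summation-by-parts negligible there. Throughout, note that the hypotheses on $L$ force $p/(p-L)>0$ in each of the three regimes $p\ge1$, $0<p\le1$, $p<0$, a fact that governs the direction of every inequality below.

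The only identity needed is $\lambda_na_n=\Lambda_nA_n-\Lambda_{n+1}A_{n+1}$, which gives
\begin{equation*}
  \frac{p}{p-L}\,a_nA_n^{p-1}=\frac{p}{p-L}\frac{\Lambda_n}{\lambda_n}A_n^p-\frac{p}{p-L}\frac{\Lambda_{n+1}}{\lambda_n}A_{n+1}A_n^{p-1}.
\end{equation*}
Next I would estimate the cross term by the elementary convexity bound $A_{n+1}A_n^{p-1}\le\frac1pA_{n+1}^p+\frac{p-1}{p}A_n^p$, which holds for $p\ge1$ and reverses for $p<0$ and for $0<p\le1$. Substituting it, and using $\frac{p}{p-L}\frac{\Lambda_{n+1}}{\lambda_n}>0$, produces for $p\ge1$ the pointwise estimate
\begin{equation*}
  \frac{p}{p-L}\,a_nA_n^{p-1}\ \ge\ \Big(\frac{p}{p-L}\frac{\Lambda_n}{\lambda_n}-\frac{p-1}{p-L}\frac{\Lambda_{n+1}}{\lambda_n}\Big)A_n^p-\frac1{p-L}\frac{\Lambda_{n+1}}{\lambda_n}A_{n+1}^p,
\end{equation*}
and the same estimate with the inequality reversed when $p<0$ or $0<p\le1$.

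Summing over $1\le n\le N$ and shifting the index in the $A_{n+1}^p$ terms — the boundary contribution vanishes because $A_{N+1}=0$ when $p>0$, and by the limit hypothesis when $p<0$ — the coefficient multiplying $A_n^p$ collapses, after inserting $\Lambda_{n+1}=\Lambda_n-\lambda_n$ and $\Lambda_n/\lambda_{n-1}=\Lambda_{n-1}/\lambda_{n-1}-1$ and using the convention $\Lambda_0/\lambda_0=1$, to
\begin{equation*}
  1+\frac1{p-L}\Big(L-\Big(\frac{\Lambda_{n-1}}{\lambda_{n-1}}-\frac{\Lambda_n}{\lambda_n}\Big)\Big).
\end{equation*}
Hypothesis \eqref{6.6} makes the bracket $L-(\Lambda_{n-1}/\lambda_{n-1}-\Lambda_n/\lambda_n)$ nonnegative, so this coefficient is $\ge1$ when $p-L>0$ (i.e.\ $p\ge1$) and $\le1$ when $p-L<0$ (i.e.\ $p<0$); the reversed form of \eqref{6.6} makes the bracket nonpositive, so the coefficient is $\le1$ when $0<p\le1$. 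Since each $A_n^p>0$, in every case $\sum_n(\text{coefficient})A_n^p$ may be replaced by $\sum_nA_n^p$ in the favourable direction, yielding $\sum_{n=1}^NA_n^p\le\frac{p}{p-L}\sum_{n=1}^Na_nA_n^{p-1}$ when $p\ge1$ and its reverse when $p<0$ or $0<p\le1$; a routine passage to the limit $N\to\infty$ (monotone convergence; the sums are already infinite when $p<0$) finishes the proof.

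I expect the main difficulty to be bookkeeping rather than ideas: one must keep the direction of the convexity bound and the sign of $p/(p-L)$ consistent across the three regimes, and must confirm that the boundary term of the summation-by-parts step is genuinely controlled, in particular that the stated limit hypothesis is precisely what is needed when $p<0$. The one substantive computation is checking that, after the telescoping, the coefficient of $A_n^p$ really is $1+\frac1{p-L}(L-(\Lambda_{n-1}/\lambda_{n-1}-\Lambda_n/\lambda_n))$, so that condition \eqref{6.6} enters in exactly the right way.
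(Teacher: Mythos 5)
Your proposal is correct and is essentially the paper's own proof: the convexity bound $A_{n+1}A_n^{p-1}\le \frac1p A_{n+1}^p+\frac{p-1}{p}A_n^p$ is exactly the paper's inequality $x^p-px+p-1\ge 0$ with $x=A_{n+1}/A_n$, and your identity $\lambda_na_n=\Lambda_nA_n-\Lambda_{n+1}A_{n+1}$ is the paper's substitution for $A_{n+1}$, so after telescoping you arrive at the same estimate $\sum_{n=1}^N\big(\Lambda_n/\lambda_n-\Lambda_{n-1}/\lambda_{n-1}+p\big)A_n^p\le p\sum_{n=1}^Na_nA_n^{p-1}$ (up to the harmless factor $p-L$) and invoke \eqref{6.6} in the same way. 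The handling of $p<0$ via genuine tails and the stated limit hypothesis for the boundary term also matches the paper.
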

\begin{proof}
    We consider the cases $p \geq 1$ and $0<p \leq 1$ first. Due to similarities, we assume $p \geq 1$ here. We let $N$ be a large integer and
    start with the inequality $x^p-px+p-1 \geq 0$, valid for
   $x>0, p \geq 1$ or $p<0$ with the reversed inequality being valid for
  $x>0, 0<p \leq 1$.
  On setting $x=A_{n+1}/A_n$, $1 \leq n \leq N$ with $A_{N+1}=0$, where $A_n, 1 \leq n \leq N$ is defined as in \eqref{6.3}, we obtain
\begin{equation}
\label{3.4}
   A^p_{n+1}+(p-1)A^p_n \geq pA_{n+1}A^{p-1}_n.
\end{equation}
    Note that
\begin{equation*}
   A_{n+1}=\frac {\Lambda_nA_n}{\Lambda_{n+1}}-\frac
   {\lambda_na_n}{\Lambda_{n+1}}.
\end{equation*}
    Substituting this expression of $A_{n+1}$ on the right-hand
    side of \eqref{3.4}, we obtain after some simplifications that
\begin{equation*}
   \Big ( \frac {\Lambda_n}{\lambda_n}+p-1 \Big ) A^p_n-\Big (\frac {\Lambda_n}{\lambda_n}-1 \Big )A^p_{n+1} \leq pa_nA^{p-1}_n .
\end{equation*}
    Summing the above inequality from $n=1$ to $N$, we obtain
\begin{equation}
\label{6.9}
    \sum^N_{n=1}\Big ( \frac {\Lambda_n}{\lambda_n}
   -\frac {\Lambda_{n-1}}{\lambda_{n-1}}+p \Big ) A^p_n \leq p\sum^{N}_{n=1}a_nA^{p-1}_n .
\end{equation}
    The assertion of the
    theorem for the cases $p \geq 1$ now follows easily from the case $N \rightarrow +\infty$ of the above
    inequality and inequality \eqref{6.6}.

    The case $p<0$ of the assertion of the theorem follows from the same arguments above, except this time we substitute $A_n$ by
   $\sum^{\infty}_{k=n}\lambda_ka_k/\Lambda_n$. In this case, we may assume $\sum^{\infty}_{k=n}\lambda_ka_k/\Lambda_n<+\infty$,
   for otherwise, the assertion of the theorem holds automatically.
\end{proof}
     When $\sum^{\infty}_{n=1}a^p_n< +\infty$ and that $\sum^{\infty}_{n=1}\Big (\sum^{\infty}_{k=n}\lambda_ka_k/\Lambda_n\Big )^p <+\infty$,
     then by H\"older's inequality, we have for $p>1$,
\begin{equation*}
    \sum^{\infty}_{n=1}a_n \Big ( \sum^{\infty}_{k=n}\frac
   {\lambda_ka_k}{\Lambda_n}  \Big )^{p-1} \leq
   \Big(\sum^{\infty}_{n=1}a^p_n \Big )^{1/p}\Big (\sum^{\infty}_{n=1}\Big (\sum^{\infty}_{k=n}\frac
   {\lambda_ka_k}{\Lambda_n}\Big )^p \Big )^{1/q}.
\end{equation*}
   with the above inequality reversed when $0 \neq p<1$ and from
   which one easily deduces the assertion of Corollary
   \ref{cor3.1}. Note that when $p>0$, one can also deduce the assertion of Corollary
   \ref{cor3.1} without assuming $\sum^{\infty}_{n=1}\Big (\sum^{\infty}_{k=n}\lambda_ka_k/\Lambda_n\Big )^p
   <+\infty$. Since one can start with \eqref{6.9}, repeat the
   argument above and then let $N \rightarrow +\infty$.

    We now study the following so called weighted remainder form of Carleman-type inequality, corresponding to the limiting case $p \rightarrow +\infty$ of
inequality \eqref{6.1} (after a change of variables $a^p_n
\rightarrow a_n$):
\begin{equation}
\label{6.8}
   \sum^{\infty}_{n=1}\Big( \prod^{\infty}_{k=n}a^{\lambda_k/\Lambda_n}_k \Big ) \leq E\sum^{\infty}_{n=1}a_n.
\end{equation}
 This is first studied by Pe\v cari\'c and
Stolarsky in \cite[Sect 3]{P&S}.
   Our starting point is the following result of Pe\v cari\'c and Stolarsky
\cite[(3.5)]{P&S}, which is an outgrowth of Redheffer's approach
in \cite{R1}:
\begin{equation}
\label{5.1}
   \sum^N_{n=1}\Lambda_n(b_n-1)G_n+G_1\Lambda_1-\Lambda_{N+1}G_{N+1}
\leq \sum^N_{n=1}\lambda_na_nb^{\Lambda_n/\lambda_n}_n,
\end{equation}
  where $N$ is a large integer, ${\bf b}$ is any positive sequence and
\begin{equation*}
   G_n=\prod^{\infty}_{k=n}a^{\lambda_k/\Lambda_n}_k.
\end{equation*}
   We now make a change of variables $\lambda_na_nb^{\Lambda_n/\lambda_n}_n \rightarrow a_n$ to recast
  inequality \eqref{5.1} as
\begin{eqnarray*}
  && \sum^N_{n=1}\Lambda_n(b_n-1)
   \Big( \prod^{\infty}_{k=n}\lambda^{-\lambda_k/\Lambda_n}_k \Big )\Big( \prod^{\infty}_{k=n}b^{-\Lambda_k/\Lambda_n}_k \Big
   )G_n+G_1\Lambda_1\Big( \prod^{\infty}_{k=1}\lambda^{-\lambda_k/\Lambda_n}_k \Big )\Big( \prod^{\infty}_{k=1}b^{-\Lambda_k/\Lambda_n}_k \Big
   ) \\
   &&-G_{N+1}\Lambda_{N+1}\Big( \prod^{\infty}_{k=N+1}\lambda^{-\lambda_k/\Lambda_{N+1}}_k \Big )\Big( \prod^{\infty}_{k=N+1}b^{-\Lambda_k/\Lambda_{N+1}}_k \Big
   )
\leq \sum^N_{n=1}a_n.
\end{eqnarray*}

   Now, a further change of variables $b_n \rightarrow
   \lambda_{n-1}b_n/\lambda_n$ with $\lambda_0>0$ an arbitrary number allows us to recast the above
   inequality as
\begin{equation}
\label{5.2}
  \frac {\Lambda_1b_1G_1}{\lambda_1}\prod^{\infty}_{k=1}b^{-\Lambda_k/\Lambda_n}_k
  + \sum^N_{n=2}\Lambda_n\Big(\frac {b_n}{\lambda_n}-\frac {1}{\lambda_{n-1}}\Big)G_n \prod^{\infty}_{k=n}b^{-\Lambda_k/\Lambda_n}_k
  -\frac {\Lambda_{N+1}G_{N+1}}{\lambda_N} \prod^{\infty}_{k=N+1}b^{-\Lambda_k/\Lambda_{N+1}}_k
\leq \sum^N_{n=1}a_n.
\end{equation}

   If we now choose the values of $b_n$'s so that
   $\prod^{\infty}_{k=n}b^{-\Lambda_k/\Lambda_n}_k=e^{-M}$, we
   then solve the $b_n$'s to get $b_n=e^{M\lambda_n/\Lambda_n}$ and upon substituting these
   values for $b_n$'s we obtain via \eqref{5.2}:
\begin{equation}
\label{5.2'}
   \frac {\Lambda_1}{\lambda_1}e^{M\lambda_1/\Lambda_1}G_1
   +\sum^N_{n=2}\Lambda_n\Big(\frac {e^{M\lambda_n/\Lambda_n}}{\lambda_n}-\frac {1}{\lambda_{n-1}}\Big) G_n
   -\frac {\Lambda_{N+1}}{\lambda_N}G_{N+1} \leq e^M\sum^N_{n=1}a_n.
\end{equation}

    We immediately deduce from \eqref{5.2'} the following
\begin{theorem}
\label{thm0}
   Suppose that $\lim_{n \rightarrow
   \infty}\Lambda_{n+1}G_{n+1}/\lambda_n = 0$ and that (with
   $\Lambda_0/\lambda_0=1$)
\begin{equation*}
  M=\sup_{n \geq 1}\frac
    {\Lambda_n}{\lambda_n}\log \Big(\frac {\Lambda_{n-1}/\lambda_{n-1}}{\Lambda_n/\lambda_n} \Big ) < +\infty,
\end{equation*}
    then inequality \eqref{6.8} holds with $E=e^M$.
\end{theorem}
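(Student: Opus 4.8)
The plan is to read the theorem off directly from inequality \eqref{5.2'}, which has already been established via the Pe\v cari\'c--Stolarsky form \eqref{5.1} of Redheffer's argument. The whole point is that the defining condition on $M$ is exactly what is needed to force every coefficient of $G_n$ on the left-hand side of \eqref{5.2'} to be at least $1$. First I would restate the hypothesis: since $M$ is a supremum, for every $n\geq 1$ we have $M\geq (\Lambda_n/\lambda_n)\log\big((\Lambda_{n-1}/\lambda_{n-1})/(\Lambda_n/\lambda_n)\big)$, and dividing by $\Lambda_n/\lambda_n$ and exponentiating shows this is equivalent to
\[
  \frac{\Lambda_n}{\lambda_n}e^{M\lambda_n/\Lambda_n}\geq \frac{\Lambda_{n-1}}{\lambda_{n-1}},
\]
where for $n=1$ one uses the convention $\Lambda_0/\lambda_0=1$.

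Next I would check the coefficients appearing in \eqref{5.2'}. For $n\geq 2$ the coefficient of $G_n$ satisfies
\[
  \Lambda_n\Big(\frac{e^{M\lambda_n/\Lambda_n}}{\lambda_n}-\frac{1}{\lambda_{n-1}}\Big)
  =\frac{\Lambda_n}{\lambda_n}e^{M\lambda_n/\Lambda_n}-\frac{\Lambda_n}{\lambda_{n-1}}
  \geq \frac{\Lambda_{n-1}}{\lambda_{n-1}}-\frac{\Lambda_n}{\lambda_{n-1}}
  =\frac{\Lambda_{n-1}-\Lambda_n}{\lambda_{n-1}}=1,
\]
where the inequality is the restated hypothesis and the final equality uses $\Lambda_{n-1}-\Lambda_n=\lambda_{n-1}$, which is immediate from $\Lambda_n=\sum_{k=n}^\infty\lambda_k$. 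For $n=1$, the restated hypothesis with $\Lambda_0/\lambda_0=1$ gives $(\Lambda_1/\lambda_1)e^{M\lambda_1/\Lambda_1}\geq 1$, so the coefficient of $G_1$ in \eqref{5.2'} is also at least $1$. Since every $G_n$ is positive, \eqref{5.2'} then yields $\sum_{n=1}^N G_n-\Lambda_{N+1}G_{N+1}/\lambda_N\leq e^M\sum_{n=1}^N a_n$ for every large $N$.

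Finally I would let $N\to\infty$: the tail term $\Lambda_{N+1}G_{N+1}/\lambda_N$ vanishes in the limit by the standing hypothesis $\lim_{n\to\infty}\Lambda_{n+1}G_{n+1}/\lambda_n=0$, and what remains is $\sum_{n=1}^\infty G_n\leq e^M\sum_{n=1}^\infty a_n$, which is precisely inequality \eqref{6.8} with $E=e^M$. I do not expect any real obstacle here, since the analytic work was already done in deriving \eqref{5.2'}; the only minor points that need care are the separate handling of the $n=1$ term (together with the role of the convention $\Lambda_0/\lambda_0=1$) and the verification that the boundary term drops out, and both are covered by the hypotheses as stated.
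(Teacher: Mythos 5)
Your proposal is correct and follows exactly the paper's route: the paper derives \eqref{5.2'} with the choice $b_n=e^{M\lambda_n/\Lambda_n}$ and then states that Theorem \ref{thm0} follows immediately, and your argument simply makes that deduction explicit (each coefficient of $G_n$ is at least $1$ by the exponentiated form of the hypothesis, the $n=1$ term uses $\Lambda_0/\lambda_0=1$, and the boundary term vanishes as $N\to\infty$). No gaps; the verification $\Lambda_{n-1}-\Lambda_n=\lambda_{n-1}$ and the limit step are exactly what the paper leaves implicit.
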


    We note that
\begin{equation*}
 \log \Big(\frac {\Lambda_{n-1}/\lambda_{n-1}}{\Lambda_n/\lambda_n}
\Big )=\log \Big(1+\frac
{\Lambda_{n-1}/\lambda_{n-1}-\Lambda_n/\lambda_n}{\Lambda_n/\lambda_n}
\Big ) \leq \frac
{\Lambda_{n-1}/\lambda_{n-1}-\Lambda_n/\lambda_n}{\Lambda_n/\lambda_n}.
\end{equation*}
    It follows from this and Theorem \ref{thm0} that we have the
    following
\begin{cor}
\label{cor2.0}
   Suppose that $\lim_{n \rightarrow
   \infty}\Lambda_{n+1}G_{n+1}/\lambda_n = 0$ and that (with
   $\Lambda_0/\lambda_0=1$)
\begin{equation*}
  M=\sup_{n \geq 1} \Big ( \frac {\Lambda_{n-1}}{\lambda_{n-1}}-\frac {\Lambda_{n}}{\lambda_{n}} \Big ) < +\infty,
\end{equation*}
  then inequality \eqref{6.8} holds with $E=e^M$.
\end{cor}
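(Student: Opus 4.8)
The plan is to derive Corollary \ref{cor2.0} as an immediate consequence of Theorem \ref{thm0} together with the elementary estimate $\log(1+x)\le x$, valid for all $x>-1$. The first hypothesis of Theorem \ref{thm0}, namely $\lim_{n\to\infty}\Lambda_{n+1}G_{n+1}/\lambda_n=0$, is assumed verbatim in the statement of the corollary, so nothing needs to be checked there; only the quantity governing the constant in Theorem \ref{thm0} must be controlled.

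Concretely, I would bound the quantity
\begin{equation*}
   M'=\sup_{n\ge 1}\frac{\Lambda_n}{\lambda_n}\log\Big(\frac{\Lambda_{n-1}/\lambda_{n-1}}{\Lambda_n/\lambda_n}\Big)
\end{equation*}
appearing in Theorem \ref{thm0}. Since the $\lambda_k$ are positive and $\Lambda_n=\sum_{k\ge n}\lambda_k>0$, the ratios $\Lambda_n/\lambda_n$ are positive for every $n\ge 0$ (recall the convention $\Lambda_0/\lambda_0=1$), so
\begin{equation*}
   x_n:=\frac{\Lambda_{n-1}/\lambda_{n-1}-\Lambda_n/\lambda_n}{\Lambda_n/\lambda_n}>-1 .
\end{equation*}
Applying $\log(1+x_n)\le x_n$ and multiplying through by $\Lambda_n/\lambda_n>0$ gives
\begin{equation*}
   \frac{\Lambda_n}{\lambda_n}\log\Big(\frac{\Lambda_{n-1}/\lambda_{n-1}}{\Lambda_n/\lambda_n}\Big)\le \frac{\Lambda_{n-1}}{\lambda_{n-1}}-\frac{\Lambda_n}{\lambda_n}
\end{equation*}
for every $n\ge 1$ — this is exactly the displayed computation immediately preceding the corollary. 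Taking the supremum over $n$ then yields $M'\le M<+\infty$.

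Finally, Theorem \ref{thm0} applies and delivers inequality \eqref{6.8} with constant $E=e^{M'}$. Since $e^{M'}\le e^{M}$ and the left-hand side of \eqref{6.8} is independent of $E$, the chain $\sum_n G_n\le e^{M'}\sum_n a_n\le e^{M}\sum_n a_n$ shows that \eqref{6.8} also holds with the (generally larger, hence weaker) constant $E=e^M$, which is precisely the assertion. The only genuine point requiring care is the sign condition $x_n>-1$ that legitimizes the use of $\log(1+x)\le x$, and as noted this is forced by the positivity of the sequence $(\lambda_k)$; beyond that the argument is purely formal, so I do not expect any real obstacle.
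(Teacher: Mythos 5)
Your proposal is correct and follows essentially the same route as the paper: the displayed estimate $\log(1+x)\le x$ applied to the quantity in Theorem \ref{thm0} is exactly the computation the paper gives immediately before the corollary, after which one concludes with $e^{M'}\le e^{M}$. No gaps; the only extra detail you add, the check that the argument of the logarithm is positive, is harmless and correct.
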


    We now consider another choice for the $b_n$'s in \eqref{5.2} by
    setting
    $b_n=e^{(\Lambda_{n-1}/\lambda_{n-1}-\Lambda_{n}/\lambda_{n})/(\Lambda_{n}/\lambda_{n})}$
    with $\Lambda_0/\lambda_0=1$
    and it follows from this and \eqref{5.2} that
\begin{eqnarray*}
  && \sum^N_{n=1}\Big(\frac {\Lambda_ne^{(\Lambda_{n-1}/\lambda_{n-1}-\Lambda_{n}/\lambda_{n})/(\Lambda_{n}/\lambda_{n})}}{\lambda_n}
   -\frac {\Lambda_{n-1}}{\lambda_{n-1}}+1\Big)G_n
   e^{-\sum^{\infty}_{k=n}\frac {\lambda_k}{\Lambda_n}(\frac {\Lambda_{k-1}}{\lambda_{k-1}}-\frac {\Lambda_{k}}{\lambda_{k}})} \\
   && -\frac {\Lambda_{N+1}G_{N+1}}{\lambda_N}
   e^{-\sum^{\infty}_{k=N+1}\frac {\lambda_k}{\Lambda_{N+1}}(\frac {\Lambda_{k-1}}{\lambda_{k-1}}-\frac {\Lambda_{k}}{\lambda_{k}})}\leq
\sum^N_{n=1}a_n,
\end{eqnarray*}
   from which we deduce the following
\begin{cor}
\label{cor2.00}
  Suppose that $\lim_{n \rightarrow
   \infty}\Lambda_{n+1}G_{n+1}/\lambda_n = 0$ and that (with
   $\Lambda_0/\lambda_0=1$)
\begin{equation*}
  M=\sup_{n \geq 1} \sum^{\infty}_{k=n}\frac {\lambda_k}{\Lambda_n}(\frac {\Lambda_{k-1}}{\lambda_{k-1}}-\frac {\Lambda_{k}}{\lambda_{k}}) < +\infty,
\end{equation*}
  then inequality \eqref{6.8} holds with $E=e^M$.
\end{cor}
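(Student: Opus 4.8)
The plan is to extract the conclusion directly from the inequality displayed immediately before the statement of the corollary, which came from substituting $b_n=e^{(\Lambda_{n-1}/\lambda_{n-1}-\Lambda_n/\lambda_n)/(\Lambda_n/\lambda_n)}$ into \eqref{5.2}. Write $x_n=\Lambda_n/\lambda_n$ with the convention $x_0=1$, and $E_n=\sum_{k=n}^{\infty}(\lambda_k/\Lambda_n)(x_{k-1}-x_k)$. In this notation that inequality reads
\begin{equation*}
  \sum_{n=1}^{N}\bigl(x_ne^{(x_{n-1}-x_n)/x_n}-x_{n-1}+1\bigr)G_ne^{-E_n}-\frac{\Lambda_{N+1}G_{N+1}}{\lambda_N}e^{-E_{N+1}}\le\sum_{n=1}^{N}a_n ,
\end{equation*}
so the task splits into bounding each coefficient on the left below by $e^{-M}$ and then showing that the boundary term vanishes as $N\to\infty$.

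For the coefficients, since $x_n>0$ the elementary inequality $e^{t}\ge 1+t$ with $t=(x_{n-1}-x_n)/x_n$ gives $x_ne^{(x_{n-1}-x_n)/x_n}\ge x_{n-1}$, whence $x_ne^{(x_{n-1}-x_n)/x_n}-x_{n-1}+1\ge 1$ for every $n\ge1$ (at $n=1$ this uses the convention $x_0=1$). Also $\sum_{k=n}^{\infty}\lambda_k/\Lambda_n=1$, so $E_n$ is a weighted average of the numbers $x_{k-1}-x_k$, $k\ge n$, and by the definition of $M$ we have $E_n\le M$, hence $e^{-E_n}\ge e^{-M}$. Since $G_n\ge0$, each summand on the left is at least $e^{-M}G_n$, and therefore
\begin{equation*}
  e^{-M}\sum_{n=1}^{N}G_n\le\sum_{n=1}^{N}a_n+\frac{\Lambda_{N+1}G_{N+1}}{\lambda_N}e^{-E_{N+1}}\le\sum_{n=1}^{\infty}a_n+\frac{\Lambda_{N+1}G_{N+1}}{\lambda_N}e^{-E_{N+1}} .
\end{equation*}

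Letting $N\to\infty$ then finishes the argument, since the left side is a partial sum of a fixed nonnegative series: once the boundary term tends to $0$ we obtain $e^{-M}\sum_{n=1}^{\infty}G_n\le\sum_{n=1}^{\infty}a_n$, which is \eqref{6.8} with $E=e^M$. The first factor $\Lambda_{N+1}G_{N+1}/\lambda_N$ does tend to $0$ by hypothesis, so everything hinges on the accompanying factor $e^{-E_{N+1}}$ not growing too fast; this is the step I expect to be the main obstacle, because $E_{N+1}$ is only known to satisfy $E_{N+1}\le M$ and so is not a priori bounded below. I would control it by Abel summation: the telescoping identity $\sum_{k=N+1}^{m}(x_{k-1}-x_k)=x_N-x_m$ together with $\sum_{k>N}\lambda_k/\Lambda_{N+1}=1$ lets one re-express $E_{N+1}$ through the tail weights $\lambda_k/\Lambda_{N+1}$ and the increments $\lambda_{k+1}-\lambda_k$, and from that representation one can estimate $-E_{N+1}$ and conclude $(\Lambda_{N+1}G_{N+1}/\lambda_N)e^{-E_{N+1}}\to0$. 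This last estimate is the one spot where the standing hypotheses have to be invoked carefully.
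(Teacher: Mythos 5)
Your deduction is exactly the paper's: the corollary is read off from the display preceding it, using $e^{t}\ge 1+t$ to see that $x_ne^{(x_{n-1}-x_n)/x_n}-x_{n-1}+1\ge 1$ (with $x_n=\Lambda_n/\lambda_n$, $x_0=1$), using $E_n\le M$ (by the definition of $M$) to get $e^{-E_n}\ge e^{-M}$, and then letting $N\to\infty$. The boundary-term issue you flag at the end is real, but it is precisely the point the paper passes over in silence: the paper deduces the corollary from the same display using only the hypothesis $\Lambda_{N+1}G_{N+1}/\lambda_N\to 0$, exactly as in Theorem \ref{thm0}, where the analogous exponential factor is the constant $e^{-M}$ and no such issue arises; here the factor $e^{-E_{N+1}}$ is not constant, and neither the paper nor your sketch supplies a lower bound for $E_{N+1}$. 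So you have not missed any idea that the paper actually provides; but note that your proposed Abel-summation repair is only a plan, and a bound of the form $E_{N+1}\ge -C$ does not follow formally from $\sup_{n\ge 1}E_n<+\infty$ alone, so to make this last step airtight one must either verify such a lower bound for the particular weights $(\lambda_n)$ in hand or strengthen the limit hypothesis to $\Lambda_{N+1}G_{N+1}e^{-E_{N+1}}/\lambda_N\to 0$ (it suffices along a subsequence, since the partial sums on the left are nondecreasing). In short: same approach as the paper, complete on the main terms, with the unfinished boundary estimate being a loose end you share with, and indeed inherited from, the paper's one-line deduction.
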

    Note that the above corollary also implies Corollary
    \ref{cor2.0}. We now consider some applications of our results above.
     When $\lambda_n=n^{\alpha}-(n+1)^{\alpha}$, $-1 \leq
     \alpha <0$, by Lemma 1 (and property (iv) of $f_{\alpha}(x)$ defined there) of
     \cite{Be1}, we have for $n \geq 1$,
\begin{equation}
\label{6.11}
     \frac {n^{\alpha}}{n^{\alpha}-(n+1)^{\alpha}}-\frac
     {(n+1)^{\alpha}}{(n+1)^{\alpha}-(n+2)^{\alpha}} \leq \frac
     {1}{\alpha}.
\end{equation}
     One can show also easily that $1-\alpha \geq 2^{-\alpha}$ for $-1 \leq \alpha
     <0$ and that for fixed $-1 \leq \alpha<0$. Moreover, we have
     $(n+1)^{\alpha}G_{n+1}/(n^{\alpha}-(n+1)^{\alpha}) \leq
     (n+1)G_{n+1}/(-\alpha)$, so that it follows from Corollary \ref{cor2.0} that we
     have the following
\begin{cor}
\label{cor2.1}
   Let $-1 \leq \alpha <0$ and assume that $\lim_{n \rightarrow
   \infty}nG_{n}=0$, then
\begin{equation}
\label{2.4}
   \sum^{\infty}_{n=1}\Big( \prod^{\infty}_{k=n}a^{(k^{\alpha}-(k+1)^{\alpha})/n^{\alpha}}_k \Big ) \leq e^{1/\alpha}\sum^{\infty}_{n=1}a_n.
\end{equation}
    The constant is best possible.
\end{cor}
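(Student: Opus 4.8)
The plan is to obtain Corollary \ref{cor2.1} by specializing Corollary \ref{cor2.0} to the weight $\lambda_n=n^\alpha-(n+1)^\alpha$, so the whole task is to identify $\Lambda_n$ and to check the two hypotheses of Corollary \ref{cor2.0}. First, since $\alpha<0$ we have $k^\alpha\to 0$, so the series defining $\Lambda_n$ telescopes: $\Lambda_n=\sum_{k=n}^\infty\big(k^\alpha-(k+1)^\alpha\big)=n^\alpha$. Hence $\lambda_k/\Lambda_n=\big(k^\alpha-(k+1)^\alpha\big)/n^\alpha$, so that inequality \eqref{6.8} for this choice of weight is exactly inequality \eqref{2.4}, and it remains only to verify that Corollary \ref{cor2.0} applies with $M\le 1/\alpha$ (recall $1/\alpha<0$, so $e^M\le e^{1/\alpha}$ would then give \eqref{2.4} with the stated constant, a larger admissible constant being harmless).

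Next I would dispatch the decay hypothesis of Corollary \ref{cor2.0}. Here $\Lambda_{n+1}/\lambda_n=(n+1)^\alpha/\big(n^\alpha-(n+1)^\alpha\big)$, and an elementary estimate, equivalent to $(1+1/n)^{-\alpha}\ge 1+(-\alpha)/(n+1)$ (which follows from convexity of $t\mapsto(1+1/n)^t$ together with $\log(1+1/n)\ge 1/(n+1)$), gives $\Lambda_{n+1}/\lambda_n\le (n+1)/(-\alpha)$; therefore $\Lambda_{n+1}G_{n+1}/\lambda_n\le (n+1)G_{n+1}/(-\alpha)\to 0$ by the standing assumption $\lim_{n\to\infty}nG_n=0$. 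For the second hypothesis I must bound $M=\sup_{n\ge 1}\big(\Lambda_{n-1}/\lambda_{n-1}-\Lambda_n/\lambda_n\big)$, where $\Lambda_0/\lambda_0=1$. For $n\ge 2$ the $n$-th term equals $\frac{(n-1)^\alpha}{(n-1)^\alpha-n^\alpha}-\frac{n^\alpha}{n^\alpha-(n+1)^\alpha}$, which is the left side of \eqref{6.11} with $n$ replaced by $n-1\ge 1$, hence is $\le 1/\alpha$. For $n=1$ the term is $1-\frac{1}{1-2^\alpha}=\frac{-2^\alpha}{1-2^\alpha}$, and $\frac{-2^\alpha}{1-2^\alpha}\le\frac1\alpha$ rearranges (using $\alpha<0$ and $1-2^\alpha>0$) to $2^\alpha(1-\alpha)\ge 1$, i.e. $1-\alpha\ge 2^{-\alpha}$, which holds for $-1\le\alpha<0$ since $t\mapsto 2^t$ is convex and agrees with $1+t$ at $t=0$ and $t=1$. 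Thus $M\le 1/\alpha$ and Corollary \ref{cor2.0} delivers \eqref{2.4} with constant $e^{1/\alpha}$.

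For the best-possibility assertion I would exhibit near-extremal sequences: take $a_k=k^{-x}$ (or a suitable truncation) and let $x\to 1^+$. Using $\log k=\log n+\log(k/n)$ in $\sum_{k=n}^\infty\frac{k^\alpha-(k+1)^\alpha}{n^\alpha}\log k$ and a standard integral comparison one finds $\prod_{k=n}^\infty a_k^{\lambda_k/\Lambda_n}\sim e^{x/\alpha}\,n^{-x}$, so that the ratio of the two sides of \eqref{2.4} tends to $e^{x/\alpha}\to e^{1/\alpha}$; this mirrors the constructions used for inequalities \eqref{6.1} and \eqref{6.8} elsewhere. The verifications in the preceding paragraph are immediate once \eqref{6.11} is granted; the only point needing genuine care is this last construction, where the error terms in the asymptotics of the defining product must be controlled uniformly in $n$ as $x\downarrow 1$.
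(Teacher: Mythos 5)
Your proposal is correct and follows essentially the same route as the paper: specializing Corollary \ref{cor2.0} to $\lambda_n=n^{\alpha}-(n+1)^{\alpha}$ (so $\Lambda_n=n^{\alpha}$), bounding $M$ via \eqref{6.11} for $n\geq 2$ and $1-\alpha\geq 2^{-\alpha}$ for $n=1$, checking the decay hypothesis through $\Lambda_{n+1}G_{n+1}/\lambda_n\leq (n+1)G_{n+1}/(-\alpha)$, and establishing sharpness with $a_n=n^{-1-\epsilon}$, $\epsilon\to 0^+$. No gaps beyond the routine asymptotic bookkeeping you already flag.
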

     By taking $a_n=n^{-1-\epsilon}$ with $\epsilon \rightarrow
     0^+$, one shows that the constant in \eqref{2.4} is indeed best
     possible. We note that inequality \eqref{6.11} is reversed when $\alpha \leq
     -1$ and we also have $1-\alpha \leq 2^{-\alpha}$ when $\alpha
     \leq -1$ and it follows from Corollary \ref{cor3.1} that this
     gives another proof of the case $r \leq -1, 0<p<1$ of inequality
     \eqref{2.02}. We point out that similar to the treatment in inequality
     \eqref{1.7}, one can show the case $r \leq -1, 0<p<1$ of inequality
     \eqref{2.02} also follows from inequality \eqref{bound1} and
     the validity of inequality \eqref{8} for $\alpha \geq 2, p<0$.

     When $\lambda_n=n^{\alpha}$, $\alpha < -1$, it follows from \eqref{5.01} with $\gamma=-\alpha$, we have for $n \geq 1$,
\begin{equation*}
     \frac {\sum^{\infty}_{i=n}i^{\alpha}}{n^{\alpha}}-\frac
     {\sum^{\infty}_{i=n+1}i^{\alpha}}{(n+1)^{\alpha}} \geq \frac
     {1}{\alpha+1}.
\end{equation*}
     Note that the case $k=1$ of \eqref{5.01} with $\gamma=-\alpha$ also implies the reversed inequality \eqref{6.6}
     when $n=1$ and it follows from Corollary \ref{cor3.1} that we have the
     following
\begin{cor}
\label{cor6.5}
   Let $\alpha <-1$ and $a_n>0$, then for $0<p<1$,
\begin{equation}
\label{6.14}
   \sum^{\infty}_{n=1}\Big( \sum^{\infty}_{k=n}\frac {k^{\alpha}a_k}{\sum^{\infty}_{i=n}i^{\alpha}} \Big )^p \geq
   \Big ( \frac {(1+\alpha)p}{(1+\alpha)p-1} \Big )^p\sum^{\infty}_{n=1}a^p_n.
\end{equation}
    The constant is best possible.
\end{cor}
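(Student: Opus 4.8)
The plan is to obtain \eqref{6.14} as the reversed form of inequality \eqref{6.1} furnished by Corollary \ref{cor3.1}, applied to the sequence $\lambda_n=n^{\alpha}$. Since $\alpha<-1$, the series $\Lambda_n=\sum_{i=n}^{\infty}i^{\alpha}$ converges, so this is a legitimate choice. Matching constants forces $L=1/(1+\alpha)$: because $\alpha<-1$ we have $1+\alpha<0$, so $L<0<p$ and the sign requirement $L<p$ of Corollary \ref{cor3.1} holds for $0<p<1$; moreover $p-L=p-1/(1+\alpha)=((1+\alpha)p-1)/(1+\alpha)$, whence $(p/(p-L))^{p}=((1+\alpha)p/((1+\alpha)p-1))^{p}$, which is exactly the constant in \eqref{6.14}. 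Thus it suffices to verify the hypotheses of Corollary \ref{cor3.1} in the regime $0<p<1$, i.e. the reversed form of \eqref{6.6}.

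Write $\gamma=-\alpha>1$, so that $\lambda_n=n^{-\gamma}$ and $\Lambda_n=\sum_{i\ge n}i^{-\gamma}$. For $n\ge 1$ a one-line telescoping identity, using $\Lambda_n=\Lambda_{n+1}+n^{-\gamma}$, gives
\[
   \frac{\Lambda_n}{\lambda_n}-\frac{\Lambda_{n+1}}{\lambda_{n+1}}
   = n^{\gamma}\Lambda_n-(n+1)^{\gamma}\Lambda_{n+1}
   = 1-\bigl((n+1)^{\gamma}-n^{\gamma}\bigr)\Lambda_{n+1}.
\]
By \eqref{5.01} with $k=n+1$ we have $\bigl((n+1)^{\gamma}-n^{\gamma}\bigr)\Lambda_{n+1}\le\gamma/(\gamma-1)$, so the displayed quantity is $\ge 1-\gamma/(\gamma-1)=1/(1-\gamma)=1/(1+\alpha)=L$. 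After re-indexing this is precisely the reversed inequality \eqref{6.6} for $n\ge 2$. For $n=1$, with the convention $\Lambda_0/\lambda_0=1$, the reversed \eqref{6.6} reads $1-\sum_{i\ge 1}i^{-\gamma}\ge L$, which is the $k=1$ case of \eqref{5.01}, namely $\sum_{i\ge 1}i^{-\gamma}\le\gamma/(\gamma-1)=1+1/(\gamma-1)$. With \eqref{6.6} (reversed) established for every $n\ge 1$ and $0<p<1$, Corollary \ref{cor3.1} yields the reversed form of \eqref{6.1}, which is \eqref{6.14}.

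For optimality I would test \eqref{6.14} against $a_n=n^{-1/p-\epsilon}$ and let $\epsilon\to 0^{+}$. Comparing both $\Lambda_n=\sum_{i\ge n}i^{\alpha}$ and the inner sum $\sum_{k\ge n}k^{\alpha-1/p-\epsilon}$ with the corresponding integrals, and invoking the subadditivity $(x+y)^{p}\le x^{p}+y^{p}$ valid for $0<p<1$ to discard the lower-order contributions, one checks that the left side of \eqref{6.14} is at most $\bigl((1+\alpha)p/((1+\alpha)p-1)\bigr)^{p}(1+o(1))\sum_{n}n^{-1-p\epsilon}$ as $\epsilon\to 0^{+}$, while the right side is exactly $\bigl((1+\alpha)p/((1+\alpha)p-1)\bigr)^{p}\sum_{n}n^{-1-p\epsilon}$. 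Since $\sum_{n}n^{-1-p\epsilon}\to+\infty$, the ratio of the two sides tends to $1$; as \eqref{6.14} already forces that ratio to be $\ge 1$, the constant is best possible.

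I expect the routine-but-delicate points to be the boundary case $n=1$ of \eqref{6.6} and, in the optimality step, verifying that every error term produced by the integral comparison is genuinely $o\bigl(\sum_n n^{-1-p\epsilon}\bigr)$, so that the limit lands exactly on $\bigl((1+\alpha)p/((1+\alpha)p-1)\bigr)^{p}$ rather than on some nearby value.
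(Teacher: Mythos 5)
Your proof is correct and follows essentially the same route as the paper: apply Corollary \ref{cor3.1} (reversed case, $0<p<1$) with $\lambda_n=n^{\alpha}$ and $L=1/(1+\alpha)$, verify the reversed inequality \eqref{6.6} for $n\ge 2$ via \eqref{5.01} with $\gamma=-\alpha$ and for $n=1$ via the $k=1$ case of \eqref{5.01}, and establish optimality with $a_n=n^{-1/p-\epsilon}$, $\epsilon\to 0^{+}$. The only difference is that you spell out the telescoping computation and the asymptotic check of sharpness, which the paper leaves implicit.
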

    By taking $a_n=n^{-1/p-\epsilon}$ with $\epsilon \rightarrow
     0^+$, one shows that the constant in \eqref{6.14} is indeed best
     possible. We point out here that similar to the treatment for
     the $p>1$ case of Theorem \ref{thm1} given in Section
     \ref{sec 1}, we can recast inequality \eqref{6.14} via the duality principle
     as (by a change of variable $\alpha \rightarrow -\alpha$),
\begin{equation*}
    \sum^{\infty}_{n=1}\Big( \sum^{n}_{k=1}
    \frac {\alpha}{(\alpha-1)(k^{\alpha}-(k-1)^{\alpha})\sum^{\infty}_{i=k}i^{-\alpha}}\frac {(k^{\alpha}-(k-1)^{\alpha})a_k}{n^{\alpha}} \Big )^p \leq
   \Big ( \frac {\alpha p}{\alpha p-1} \Big )^p\sum^{\infty}_{n=1}a^p_n.
\end{equation*}
    Here we have $\alpha > 1$ and $p<0$. It is then easy to see that the above inequality (hence inequality \eqref{6.14}) also follows from
    inequality \eqref{5.01} and the $p<0$ case of inequality
    \eqref{1.5}.

\vskip0.1in
\noindent {\bf Acknowledgements.}
  The author is supported by a research fellowship from an Academic
Research Fund Tier 1 grant at Nanyang Technological University for
this work.

\end{document}